\newtheorem{theorem}{Theorem}[section]
\newtheorem{lemma}[theorem]{Lemma}
\newtheorem{proposition}[theorem]{Proposition}
\newtheorem{corollary}[theorem]{Corollary}
\newtheorem*{corollary*}{Corollary}
\newtheorem{atheorem}{Theorem}
\theoremstyle{definition}
\newtheorem{definition}[theorem]{Definition}
\theoremstyle{remark}
\newtheorem{example}[theorem]{Example}
\newtheorem{remark}[theorem]{Remark}
\renewcommand{\rm}[1]{{\mathrm{#1}}}
\newcommand{\op}{\rm{Op}_\infty}
\DeclareFontFamily{U}{min}{}
\DeclareFontShape{U}{min}{m}{n}{<-> udmj30}{}
	\def\MR#1{}
\newcommand{\C}{\mathbb{C}}
\newcommand{\Q}{\mathbb{Q}}
\newcommand{\R}{\mathbb{R}}
\newcommand{\Z}{\mathbb{Z}}
\newcommand{\id}{\operatorname{id}}
\newcommand{\fin}{\operatorname{Fin}}
\newcommand{\SO}{\mathrm{SO}}
\newcommand{\LO}{\mathrm{O}}
\newcommand{\SC}{\mathcal{SC}}
\newcommand{\Aut}{\mathrm{Aut}}
\newcommand{\map}{\operatorname{Map}}
\newcommand{\mul}{\operatorname{Mul}}
\newcommand{\mco}{\mathcal O}
\newcommand{\Cat}{\operatorname{Cat}}
\newcommand{\GL}{\text{GL}}
\newcommand{\spc}{\operatorname{Spc}}
\title[Endomorphisms and Automorphisms of Framed Little Disk Operad]{Every Endomorphism of the Framed Little Disk Operad is an Automorphism}
\author{Alice Rolf}
\address{University of Toronto}
\email{alice.rolf@mail.utoronto.ca}
\date{\today}
\begin{document}
\begin{abstract}
In a recent paper, Horel–Krannich–Kupers proved that all endomorphisms of the little $d$-disk operad are automorphisms. In this paper we show that this is also true for the framed little $d$-disk operad by using the classification of self maps of simple Lie groups. We also examine whether this property holds for the swiss cheese operad and prove that it holds for some other semidirect products of a group with a little disk operad.
\end{abstract}
\maketitle
Let $G$ be a group acting on the little disk operad $E_d$ in the sense that $G$ acts on $E_d(n)$ and the operadic structure maps are $G$-equivariant. Then we can form the semidirect product $E_d^G$ of $G$ with $E_d$ \cite{salvatore2003framed}. The stereotypical example of such an operad is the framed little disks operad $\smash{E_d}^{\SO(d)}$, first introduced by Getzler in the context of topological field theories \cite{getzler1994batalin}. More recently, the study of automorphisms of the (framed) little disks operad $\Aut(E_d^G) = \Aut_{\op}(E_d^G)$ has come up in several parts of geometric topology, including embedding calculus \cite{dwyer2012long, boavida2018spaces, ducoulombier2022delooping}, and the Grothendieck Teichmüller group \cite{willwacher2015m}.
Here we should emphasize that when we talk about mapping spaces (resp. automorphisms) of operads we mean the derived mapping spaces (resp. derived automorphisms) of the associated $\infty$-operads.\\
In \cite{horel2022two} the authors show the surprising result that all derived endomorphisms of the little disk operad are automorphisms. We prove that this is also true for the framed little disk operad:
\begin{atheorem}\label{maintheorem}
	All endomorphisms of $\smash{E_d}^{\SO(d)}$ are automorphisms.
\end{atheorem}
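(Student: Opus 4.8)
The plan is to take an endomorphism $\phi$ of $E_d^{\SO(d)}$ and split it into two pieces — its effect on the arity-one part $\SO(d)$ and its effect on the sub-operad $E_d \hookrightarrow E_d^{\SO(d)}$ — controlling the first with the classification of self-maps of compact Lie groups and the second with the Horel--Krannich--Kupers theorem, and then reassembling.

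For the arity-one part: since $E_d(1) \simeq \ast$, the operad $E_d^{\SO(d)}$ has $E_d^{\SO(d)}(1) \simeq \SO(d)$ as a grouplike $E_1$-space, and restriction of $\phi$ gives a self-map $\phi_1 \colon \SO(d) \to \SO(d)$ of grouplike $E_1$-spaces, that is, a based self-map of $B\SO(d)$. For $d = 1$ the operad is $E_1$ and Horel--Krannich--Kupers applies directly; $d = 2$ (where $\SO(2) = S^1$) needs a little extra care. For $d \ge 3$ the key input is the classification of self-maps of (almost) simple compact Lie groups — $\Spin(d)$ is simple for $d = 3$ and $d \ge 5$ and equals $\mathrm{SU}(2)^2$ for $d = 4$ — according to which such a self-map is a homotopy equivalence as soon as it is rationally injective on cohomology, the non-equivalences having a rigid form. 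So the heart of the arity-one step is to force $\phi_1^*$ to be rationally injective. For this I would use arity two: $E_d^{\SO(d)}(2) \simeq S^{d-1} \times \SO(d)^2$ carries a free $\SO(d)^2$-action by plugging rotations into the two inputs (with quotient $E_d(2) \simeq S^{d-1}$), an overall-rotation $\SO(d)$-action which on $S^{d-1}$ is the standard transitive one, and the inclusion $E_d(2) = S^{d-1} \hookrightarrow E_d^{\SO(d)}(2)$, and operadicity makes $\phi_2$ intertwine all of these. Comparing with the fibration $\SO(d-1) \to \SO(d) \to S^{d-1}$, which detects the bottom rational generator of $H^*(\SO(d);\Q)$, reduces the rational injectivity of $\phi_1$ to the non-degeneracy in arity two of the composite $\phi \circ \iota \colon E_d \to E_d^{\SO(d)}$, where $\iota$ is the inclusion of the sub-operad.

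That non-degeneracy, and in fact the whole reduction, rests on comparing $\phi$ with an honest endomorphism of $E_d$ — and this is where I expect the real work of the paper to lie. One cannot simply project $E_d^{\SO(d)} \to E_d$: this is not an operad map, because operadic composition in $E_d^{\SO(d)}$ twists each input by its framing via the $\SO(d)$-action on little disks. Instead I would use that $E_d^{\SO(d)}(n) \simeq E_d(n) \times \SO(d)^n$ carries a free $\SO(d)^n$-action (plug rotations into all $n$ inputs) with quotient $E_d(n)$, that $\phi_n$ is equivariant along $\phi_1^{\times n}$ for it, and hence descends to a map $\bar\phi_n \colon E_d(n) \to E_d(n)$. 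The claim one needs is that $\{\bar\phi_n\}$ assembles into an endomorphism of the $\infty$-operad $E_d$ — equivalently, that the discrepancy between the two composition laws (the framing-twist) is coherently trivializable on the quotient. This should hold because $\SO(d)$ is connected, which makes the twists homotopically negligible, but turning this into coherent data is the crux.

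Granting this, $\bar\phi$ is an automorphism of $E_d$ by Horel--Krannich--Kupers, in particular non-degenerate in each arity; fed back into the arity-two comparison this forces $\phi_1$ to be a homotopy equivalence. With $\phi_1$ and all $\bar\phi_n$ equivalences, a five-lemma argument on the principal $\SO(d)^n$-bundles $\SO(d)^n \to E_d^{\SO(d)}(n) \to E_d(n)$ shows that each $\phi_n$ is an equivalence, so $\phi$ is an automorphism.
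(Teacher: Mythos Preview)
Your plan has the right two inputs --- the Horel--Krannich--Kupers theorem and the self-map classification for $B\SO(d)$ --- but the way you combine them has a genuine gap, precisely where you yourself flag ``the crux'': assembling the quotient maps $\bar\phi_n$ into an $\infty$-operad endomorphism $\bar\phi$ of $E_d$. Saying the framing twist is ``homotopically negligible because $\SO(d)$ is connected'' is a pointwise observation; it supplies no mechanism for producing the coherent operadic data, and you offer none. There is also a circularity risk in the construction itself: the equivariance you invoke is along $\phi_1^{\times n}$, but $\phi_1$ is a priori only an $A_\infty$ self-map not yet known to be invertible, so the passage to quotients is not literally ``taking orbits'' and needs justification you do not give.

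The paper avoids building $\bar\phi$ altogether. Via Lurie's assembly/disintegration theory (packaged here as \Cref{theorem:theoremtangentialstructure}), $E_d^{\SO(d)}$ corresponds to the functor $\theta\colon B\SO(d)\to\op^{\text{red}}$ landing in operads equivalent to $E_d$, and an endomorphism of $E_d^{\SO(d)}$ is exactly a self-map $f\colon B\SO(d)\to B\SO(d)$ together with a natural transformation $\theta\circ f\Rightarrow\theta$. Horel--Krannich--Kupers is then applied \emph{once}, at the categorical level: since every endomorphism of $E_d$ is an automorphism, the full subcategory of $\op^{\text{red}}$ on operads equivalent to $E_d$ is already the groupoid $B\Aut(E_d)$, so the transformation data reduces to a homotopy making $f$ live over the identity of $B\Aut(E_d)$ (\Cref{lemma:automorphismcorrespondence}). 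The $E_d$-part of the endomorphism is thus automatically the identity; no quotient operad is ever constructed. From there the paper restricts to arity two and proves the stronger statement that any $f\colon B\SO(d)\to B\SO(d)$ over $B\Aut(S^{d-1})$ is homotopic to the identity, using explicit characteristic classes pulled back from $B\Aut(S^{d-1})$ (the Euler class for $d$ even, a class restricting to $p_n$ for $d$ odd) together with the Jackowski--McClure--Oliver classification and separate arguments for $d=2,4$ --- this is sharper and more direct than the rational-injectivity-via-fibration comparison you sketch.
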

Using this theorem and its proof we deduce the following theorem in the last section of this paper:
\begin{atheorem} Let $d>0$.
\begin{enumerate}
	\item All endomorphisms of $\smash{E_d}^{\text{O}(d)}$ are automorphisms.
	\item All endomorphisms of $\smash{E_{2d+1}}^{\SO(2d)}$ are automorphisms.
	\item All color-preserving endomorphisms of the (framed) swiss cheese operad $\SC_d$ are automorphisms.
\end{enumerate}
\end{atheorem}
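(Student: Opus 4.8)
The plan is to prove all three statements by the method of \Cref{maintheorem}, each time reducing to the assertion that a self-map of a compact Lie group is a homotopy equivalence. Recall from the proof of \Cref{maintheorem} that, for a semidirect product $E_k^G$ with $G$ acting on $E_k$ through a linear representation $G \to \LO(k)$, an endomorphism $\phi$ is an automorphism if and only if the induced self-map $\phi_1 \colon G \to G$ of unary operations ($E_k^G(1) \simeq G$) is a homotopy equivalence, the remaining homotopical content of $\phi$ reducing, as there, to the underlying endomorphism of $E_k$, which is automatically an equivalence by \cite{horel2022two}. Recall also that the operadic composition maps, via the $G$-action on $E_k(2) \simeq S^{k-1}$, impose a $\phi_1$-twisted equivariance relation that, when $G$ acts transitively on $S^{k-1}$, pins down $\phi_1^*$ on $H^{k-1}(G;\Z/2)$ and then lets the classification of self-maps of $G$ conclude. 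Parts (1) and (2) feed a new pair $(k,G)$ into this machine, the work being in the two places where something changes --- identifying $\phi_1$ and supplying the transitivity input; part (3) needs a variant of the idea adapted to a two-colored operad.

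For part (1), $G = \LO(d) = \SO(d) \rtimes \Z/2$. Running the machine, the $\phi_1$-twisted equivariance on $E_d(2) \simeq S^{d-1}$ identifies $\phi_1^*$ on $H^{d-1}(\SO(d);\Z/2)$, so the classification of self-maps of $\SO(d)$ makes $\phi_1$ an equivalence on the identity component $\SO(d)$. The only additional point is that $\phi_1$ is bijective on $\pi_0 \LO(d) = \Z/2$; suppose it is not, i.e.\ that $\phi_1$ carries the reflection component into $\SO(d)$. Then compatibility of $\phi_2$ with partial composition by a hyperplane reflection $r$, pushed forward along the projection $E_d^{\LO(d)}(2) \simeq S^{d-1} \times \LO(d)^2 \to S^{d-1}$, yields a relation $\bar\phi \circ \alpha_r = \alpha_{\phi_1(r)} \circ \bar\phi$ between self-maps of $S^{d-1}$, where $\alpha_g$ is the action of $g \in \LO(d)$ on $S^{d-1}$ and $\bar\phi$ is the underlying self-equivalence of $S^{d-1}$, of degree $\pm 1$ by \cite{horel2022two}. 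Read on $H_{d-1}$ this forces $-\deg\bar\phi = \deg\bar\phi$, since $\deg\alpha_r = -1$ while $\deg\alpha_{\phi_1(r)} = +1$ as $\phi_1(r) \in \SO(d)$; this is absurd. Hence $\phi_1$ is an equivalence, and $\phi$ an automorphism.

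For part (2), take $(k,G) = (2d+1, \SO(2d))$, with $\SO(2d)$ acting on $E_{2d+1}$ through the standard inclusion $\SO(2d) \hookrightarrow \SO(2d+1)$. The new feature is that this action on $E_{2d+1}(2) \simeq S^{2d}$ is not transitive: $\SO(2d)$ fixes the two poles $\pm e_{2d+1}$ and acts transitively on each equatorial sphere $S^{2d-1}$, so the orbit map into $S^{2d}$ is nullhomotopic and imposes nothing on $\phi_1$. One instead analyses the $\phi_1$-twisted equivariant self-equivalence $\bar\phi$ of $S^{2d}$ directly: it permutes the two-point fixed set, and up to equivariant homotopy it is the unreduced suspension of its restriction to the equatorial sphere --- a $\phi_1$-twisted equivariant self-equivalence $\psi$ of $S^{2d-1} = \SO(2d)/\SO(2d-1)$, on which $\SO(2d)$ acts transitively. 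The argument of \Cref{maintheorem} then applies verbatim to $(\psi, \phi_1)$, showing $\phi_1^*$ is nonzero on $H^{2d-1}(\SO(2d);\Z/2)$; the classification of self-maps of $\SO(2d)$ --- passing to $\Spin(2d)$, with $d \le 2$ treated by hand --- forces $\phi_1$, and hence $\phi$, to be an equivalence.

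For part (3), a color-preserving endomorphism $\phi$ of $\SC_d$ restricts to endomorphisms of the two monochromatic suboperads, the bulk operad ($E_d$, or its framed analogue) and the boundary operad ($E_{d-1}$, or its framed analogue), each of which is an automorphism by \cite{horel2022two} or \Cref{maintheorem}. What remains is the mixed part, the spaces $\SC_d(\mathfrak{c}^{\,a}, \mathfrak{o}^{\,b}; \mathfrak{o})$ of configurations of $a$ interior and $b$ boundary disks in a half-space. The plan is to use a description of the $\infty$-operad $\SC_d$ assembled from its two monochromatic suboperads together with the data recording how the bulk acts on the boundary --- concretely, the two suboperads plus a single low-arity mixed generator whose space of definition is rigid enough (contractible in the unframed case, a compact Lie group in the framed case) that $\phi$ must act on it through an equivalence --- so that $\phi$ on the mixed part is determined by two automorphisms and an invertible piece of coherence data, and is therefore itself an equivalence. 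I expect this part to be the main obstacle: it is the only one of the three not reducible to the Lie-theoretic core of \Cref{maintheorem}, requiring instead a sufficiently tight hold on the two-colored operad $\SC_d$ and on the rigidity of its mixed operations --- presumably also the reason only color-preserving endomorphisms are treated. Parts (1) and (2), by contrast, are routine variations on the proof of \Cref{maintheorem}, with only the $\pi_0$-bookkeeping for $\LO(d)$ and the non-transitive $\SO(2d)$-action on $S^{2d}$ as their respective subtleties.
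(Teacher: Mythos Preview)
Your overall framework --- reduce an endomorphism to a self-map of something Lie-theoretic and show that is an equivalence --- is the paper's, but your execution diverges at a basic level. The paper never works with self-maps $\phi_1 \colon G \to G$, mod-$2$ cohomology of $G$, or pointwise twisted equivariance on spheres. Via \Cref{lemma:automorphismcorrespondence} it reduces everything to a self-map $f \colon BG \to BG$ over $B\Aut(S^{k-1})$ and then argues with \emph{rational} characteristic classes of $BG$ and the Jackowski--McClure--Oliver and Adams--Mahmud classifications of $[BG,BG]$. So the strategy you ``recall from the proof of \Cref{maintheorem}'' is not the one the paper uses.

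For part (1) the paper simply observes that $f \colon B\LO(d) \to B\LO(d)$ lifts on connected covers to $\tilde f \colon B\SO(d) \to B\SO(d)$, applies \Cref{bigtheorem} to that lift to get an equivalence on $\pi_{\ge 2}$, and handles $\pi_1$ in one line by noting that the vertical map induces the isomorphism $\pi_0\LO(d) \xrightarrow{\sim} \pi_0\Aut(S^{d-1})$. Your degree argument for $\pi_0$ is correct in spirit but longer, and the identity-component step leans on a ``classification of self-maps of $\SO(d)$'' that you have not specified.

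For part (2) the paper's argument is again purely characteristic-class: the vertical map $B\SO(2d) \to B\Aut(S^{2d})$ factors through $j \colon B\SO(2d) \to B\SO(2d+1)$, and the class $\epsilon$ of \cite{krannich2021diffeomorphisms} pulls back to $p_d$ and then to $j^*(p_d)=e^2$. Hence $f^*(e^2)=e^2$, so $f^*(e)=\pm e$, and \Cref{lemmaeuler} (after composing with the reflection if the sign is $-1$) finishes. Your equivariant-geometric route has a genuine gap: the self-equivalence $\bar\phi$ of $S^{2d}$ you obtain is only $\phi_1$-equivariant up to coherent homotopy, so it makes no literal sense to restrict it to fixed poles or to an equatorial $S^{2d-1}$, and the claim that $\bar\phi$ is ``up to equivariant homotopy the unreduced suspension of its restriction'' is unsupported.

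For part (3) you have the relative difficulty backwards. The paper's argument here is the most elementary of the three: forgetting disks of one color and comparing with configuration spaces gives a natural equivalence
\[
\SC_d(s_1,\dots,s_n;HD^d)\ \simeq\ E_{d-1}(n-k)\times E_d(k),
\]
compatible with any color-preserving endomorphism. Since the restrictions to the monochromatic suboperads $E_d$ and $E_{d-1}$ (or their framed versions) are automorphisms by \cite{horel2022two} and \Cref{maintheorem}, the induced map on every mixed space is a product of equivalences. No presentation of $\SC_d$ by generators, and no rigidity analysis of a distinguished mixed operation, is needed.
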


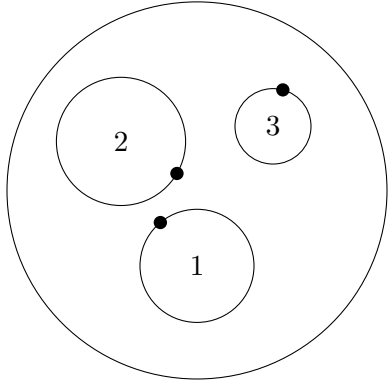
\begin{figure}[t]
\centering 
\begin{tikzpicture}[scale=0.5]
\draw (0,0) circle[radius = 5];
\draw (0,-2) node{1} circle[radius = 1.5] ;
\fill[black] (0,-2) ++(130:1.5) circle[radius=5pt];
\draw (2,1.7) node{3} circle[radius = 1] ;
\fill[black] (2,1.7) ++(75:1) circle[radius=5pt];
\draw (-2,1.3) node{2} circle[radius = 1.7] ;
\fill[black] (-2,1.3) ++(-30:1.7) circle[radius=5pt];
\end{tikzpicture}
\caption{An element in $E_2^{SO(2)}(3)$}
\end{figure}

In order to prove both theorems we are going to show the following result in Section 1 (see \Cref{lemma:automorphismcorrespondence}): The space of all endomorphisms of $E_d^G$ is exactly the space of all maps $f \colon BG \rightarrow BG$ making the diagram
\begin{equation}\label{diagramoperad}
\begin{tikzcd}
	BG \ar[r, "f"] \ar[d] & BG\ar[d]\\
	B\text{Aut}(E_d) \ar[r,"\id"] & B\text{Aut}(E_d)
\end{tikzcd}
\end{equation}
commute up to specified homotopy. Here, the vertical maps are induced by the action of $G$ on $E_d$.
In particular, showing that all endomorphisms of $E_d^{G}$ are equivalences is the same as showing that all maps $f \colon BG \rightarrow BG$ over $B\Aut(E_d)$ are invertible.\\
To prove \Cref{maintheorem} we show that all maps $f \colon B\SO(d) \rightarrow B\SO(d)$ making
\[
\begin{tikzcd}
	B\SO(d) \ar[r,"f"] \ar[d] & B\SO(d) \ar[d]\\
	B\Aut(S^{d-1}) \ar[r,"\id"] & B\Aut(S^{d-1})
\end{tikzcd}
\]
commute are homotopic to the identity, and therefore equivalences. This result is stronger than what we need: We only end up looking at the self maps of $B\SO(d)$ over arity 2 of $E_d$ instead of all of $E_d$. Further, any map making \eqref{diagramoperad} commute will automatically make the above diagram commute as well.
\tableofcontents

\subsection*{Acknowledgements}
I want to thank my advisor Sander Kupers for posing the main question to me and for insightful discussions. This work is partially supported by the Velimir Jurdjevic Graduate Scholarship. I thank Bob Oliver, Ismael Sierra del Rio, and Maarten Mol for helpful comments.
\section{Set-Up}
\subsection{Definitions}
In what follows, when we say category we implicitly mean $\infty$-category and when we say groupoids or spaces we mean $\infty$-groupoids. We will denote the $\infty$-category of spaces or $\infty$-groupoids by $\spc$ and the $\infty$-category of categories by $\Cat$. For $\infty$-categories and $\infty$-operads we will use Lurie's models as established in \cite{lurie2009higher} and \cite{Lurie2017higher}.\\
We recall some definitions of Lurie's model of $\infty$-operads from \cite{Lurie2017higher}: Lurie defines an operad $\mco$ as a functor of $\infty$-categories
\[
\mco^\otimes \rightarrow \fin_*
\]
into the category $\fin_*$ of finite pointed sets satisfying certain lifting properties \cite[\nopp 2.1.1.10]{Lurie2017higher}. Morphisms between operads are functors over $\fin_*$ preserving certain lifts \cite[\nopp 2.1.2.7]{Lurie2017higher}, i.e. the category of operads $\op$ is a subcategory of $\Cat_{/\fin_*}$. We refer to
\[
\mco^{\text{col}}:= \operatorname{fib}_{\langle 1\rangle}(\mco^\otimes \rightarrow \fin_*)
\]
as the category of colors where $\langle 1 \rangle = \{*,1\}$. An object in $\mco$ can be identified with a sequence $(c_s)_{s \in S}$ where $c_s \in \mco^\text{col}$ is indexed by a finite set $S$ \cite[\nopp 2.1.1.5]{Lurie2017higher}.\\
Lurie's definition of an operad relates to the classical 1-categorical definition through its space of multioperations \cite[\nopp 2.1.1.16 and 2.1.1.1]{Lurie2017higher}: Let $d \in \mco^\text{col}$ be another color. We define the space of multi-operations $\mul_\mco ((c_s)_{s \in S}; d)\subset \map_\mco((c_s)_{s \in S};d)$ as the components that map in $\fin_*$ to $\varphi \colon S \sqcup \{*\}\rightarrow \langle 1 \rangle$ where $\varphi$ is the unique active map, i.e. $\varphi^{-1}(\ast) = \ast$. Those spaces  of multi-operations admit a composition product that fulfill the axioms of classical 1-categorical operads up to higher coherence \cite[\nopp 2.1.1.17]{Lurie2017higher}.
\begin{definition}
\begin{enumerate}
	\item An operad is called \emph{unital} if $\mul_\mco(\emptyset; c)$ is contractible for all $c \in \mco^\text{col}$. We refer to $\op^\text{un}\subset \op$ as the full subcategory of all unital operads \cite[\nopp 2.3.1.1]{Lurie2017higher}.
	\item If in addition $\mco^\text{col}$ is a groupoid, we call $\mco$ a \emph{groupoid colored} operad. We refer to $\op^\text{gc}\subset \op^\text{un}$ as the full subcategory of all groupoid-colored operads. 
	\item Finally, a groupoid colored operad is called \emph{reduced} if $\mul_\mco(c; d)$ is contractible for all colors $c,d \in \mco^\text{col}$. We refer to $\op^\text{red}\subset \op^\text{gc}$ as the full subcategory of all reduced operads.
\end{enumerate}
\end{definition}
In \cite[\nopp 2.2]{krannich2024infty} the authors deduce the following theorem from the theory of assembly and disintegration of operads in \cite{Lurie2017higher}:
\begin{theorem}\label{theorem:theoremtangentialstructure}
	Let $\theta \colon B \rightarrow \op^\text{red}$ be a functor where $B$ is a groupoid. Then, there is an equivalence of categories
\[
(\operatornamewithlimits{colim}_{B}\theta)^\text{col} \simeq B
\] 
Furthermore, sending such a functor $\theta$ to its colimit induces an equivalence
	\[\begin{tikzcd}
		\int_{\spc}\operatorname{Fun}(-, \op^\text{red}) \ar[rr, "\simeq"] \ar[dr] && \op^\text{gc} \ar[dl]\\
		&\spc &
	\end{tikzcd}\]
	in $\operatorname{Cat}_{/\spc}$ where $\int_{\spc} \operatorname{Fun}(-, \op^\text{red})$ denotes the Grothendieck construction.
\end{theorem}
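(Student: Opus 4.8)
The plan is to deduce both assertions from Lurie's assembly and disintegration machinery for unital $\infty$-operads \cite[\nopp 2.3.4]{Lurie2017higher}, following the deduction in \cite[\nopp 2.2]{krannich2024infty} and specializing to the case in which the underlying $\infty$-category is a groupoid. Write $U\colon\op^{\text{gc}}\to\spc$ for the colors functor $\mco\mapsto\mco^{\text{col}}$. This $U$ is a cartesian fibration: given $\mco$ with colors $B'$ and a map $g\colon B\to B'$ of groupoids, the evident base change of $\mco$ along $g$ (colors $B$, unary operations the morphisms of $B$, operations of arity $\geq 2$ pulled back along $g$) is a cartesian lift. So $U$ is classified by a functor $\spc^{\mathrm{op}}\to\Cat$, and the theorem is the statement that this functor is $B\mapsto\operatorname{Fun}(B,\op^{\text{red}})$ with functoriality by restriction, the inverse equivalence being $\theta\mapsto\colim_B\theta$. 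As a first sanity check, note that any reduced operad $\mathcal P$ has $\mathcal P^{\text{col}}\simeq\ast$, since $\map_{\mathcal P^{\text{col}}}(c,d)=\mul_{\mathcal P}(c;d)$ is contractible for all colors $c,d$; so the fiber of $U$ over a point does contain $\op^{\text{red}}$, consistently with the claim.

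Next I would write down the disintegration functor. Every $\mco\in\op^{\text{gc}}$ admits a canonical operad map $\mco\to(\mco^{\text{col}})^{\amalg}$ to the cocartesian operad on its colors, characterised by $\mul_{(\mco^{\text{col}})^{\amalg}}((b_s)_{s\in S};d)=\prod_{s\in S}\map_{\mco^{\text{col}}}(b_s,d)$; this map is the identity on colors and on unary operations and only changes operations of arity $\geq 2$. For a point $b$ of $\mco^{\text{col}}$ I set
\[
\theta_{\mco}(b):=\mco\times_{(\mco^{\text{col}})^{\amalg}}\operatorname{Comm},
\]
the pullback of $\infty$-operads along the map $\operatorname{Comm}\to(\mco^{\text{col}})^{\amalg}$ classifying $b$; this is functorial in $b$, defining $\theta_{\mco}\colon\mco^{\text{col}}\to\op$. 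Because $\mco$ is groupoid-colored, $\mco^{\text{col}}=B$ is a space, so $\theta_{\mco}$ is an object of $\operatorname{Fun}(B,\op)$; moreover $\theta_{\mco}$ lands in $\op^{\text{red}}$. Indeed the pullback defining $\theta_{\mco}(b)$ has a single color (being a base change along the identity of $\mco^{\text{col}}$ and along a point), it is unital, and its space of unary operations is $\operatorname{fib}\bigl(\mul_{\mco}(b;b)\to\mul_{(\mco^{\text{col}})^{\amalg}}(b;b)\bigr)$, which is the fiber of the identity map of $\map_{\mco^{\text{col}}}(b,b)$ and hence contractible. I would then invoke Lurie's theorem that disintegration and assembly are mutually inverse on unital operads: in the present setting it says that $\mco\mapsto\theta_{\mco}$ and $\theta\mapsto\colim_B\theta$ are inverse equivalences between the fiber $U^{-1}(B)$ and $\operatorname{Fun}(B,\op^{\text{red}})$, for each $B\in\spc$. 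In particular $(\colim_B\theta)^{\text{col}}\simeq B$ — also visible directly, since $U$ preserves colimits (it is left adjoint to $(-)^{\amalg}$) and $U\circ\theta$ is essentially constant at the point, whose colimit over the space $B$ is $B$ — which is the first assertion.

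Finally I would promote the fiberwise equivalences to an equivalence over $\spc$, i.e.\ in $\Cat_{/\spc}$. Granting the previous step, this comes down to compatibility of disintegration with base change: a morphism $\mco\to\mathcal P$ lying over $g\colon B\to B'$ restricts on local pieces to a natural transformation $\theta_{\mco}\Rightarrow g^{*}\theta_{\mathcal P}$, i.e.\ a morphism in the Grothendieck construction, and cartesian morphisms go to cartesian morphisms. This is formal once the functoriality of disintegration is available, and then $(B,\theta)\mapsto\colim_B\theta$ is the asserted equivalence $\int_{\spc}\operatorname{Fun}(-,\op^{\text{red}})\xrightarrow{\ \simeq\ }\op^{\text{gc}}$.

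The main obstacle I expect to be bookkeeping rather than a genuinely new idea: \cite[\nopp 2.3.4]{Lurie2017higher} develops assembly and disintegration for arbitrary unital operads relative to their underlying $\infty$-category, so the work specific to this statement is (i) checking that the disintegration described above is the one to which Lurie's assembly theorem applies, and that groupoid-coloredness keeps the whole picture over $\spc$ with reduced local pieces — the key computation being the one above identifying the local space of unary operations with the fiber of an identity map — and (ii) upgrading the fiberwise equivalences to an equivalence of cartesian fibrations over $\spc$, which requires tracking the functoriality of disintegration closely enough to match it with precomposition in $\operatorname{Fun}(-,\op^{\text{red}})$.
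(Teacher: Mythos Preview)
Your proposal is correct and takes essentially the same route the paper points to: the paper does not prove this theorem itself but cites it from \cite[\nopp 2.2]{krannich2024infty}, where it is obtained from Lurie's assembly/disintegration theory \cite[\nopp 2.3.4]{Lurie2017higher}, and your sketch fleshes out exactly that deduction specialized to the groupoid-colored case.
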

Informally speaking, this theorem states that groupoid-colored operads can be seen as families of reduced operads.\\
Using this, we can define a tangential structure as in \cite[\nopp 2.4]{krannich2024infty}:
\begin{definition}
	Let $\mco$ be a reduced operad and $B$ a space. A \emph{tangential structure} for $\mco$ is a functor $\theta \colon B \rightarrow B\Aut(\mco)$. Then, the $\theta$-framed operad is defined as
	\[
	\mco^\theta := \operatorname{colim}(B \xrightarrow{\theta} B\Aut(\mco) \hookrightarrow \op^\text{red}) \in \op^{gc}.
	\]
\end{definition}
\begin{remark}
The above definition compares to the classical way of describing tangential structures on reduced operads as follows:\\
Let $P$ be an operad in the category of spaces in the classical sense equipped with an action of a group $G$. We can form the semidirect product $P \rtimes G$ as done in \cite[\nopp 2.1]{salvatore2003framed}. The space of $n$-ary operations is given by $P(n) \times G^n$ and the operadic composition is induced by the composition of $P$ and the action of $G$ on $P$.
The operadic nerve, as constructed in \cite[\nopp 2.1.1.27]{Lurie2017higher}, turns an ordinary operad $P$ into an $\infty$-operad $N^\otimes(P)$. If $P$ has contractible 0-ary and 1-ary operations, then $N^\otimes(P)$ is a reduced operad. Further, the action of $G$ induces a map $\theta \colon BG \rightarrow B\Aut(N^\otimes(P))$ \cite[proof of 2.10]{krannich2024infty}. It turns out that $N^\otimes(P)^\theta$ is equivalent to $N^\otimes(P\rtimes G)$ \cite[\nopp 2.10]{krannich2024infty}. 
\end{remark}
We are now ready to prove:
\begin{lemma}\label{lemma:automorphismcorrespondence}
	Let $G$ be a group acting on $E_d$ through a map 
	\[
	\theta \colon BG \rightarrow B\Aut(E_d)
	\]
	Then, endomorphisms of $E_d^G$ correspond to maps $f \colon BG \rightarrow BG$ making the following diagram commute up to homotopy: 
	\[
\begin{tikzcd}
	BG \ar[r, "f"] \ar[d, "\theta"] & BG \ar[d, "\theta"] \\
	B\Aut(E_d) \ar[r, "\id"]& B\Aut(E_d)\mathrlap{.}
\end{tikzcd}
\]
\end{lemma}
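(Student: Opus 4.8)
The plan is to realize $E_d^G$ as an object of the Grothendieck construction of \Cref{theorem:theoremtangentialstructure}, read off its endomorphism space there, and then feed in the theorem of \cite{horel2022two} to cut that space down to the one in the statement. To carry this out, first I would record that $\op^\text{gc}\subseteq\op^\text{un}\subseteq\op$ are full subcategories and that $E_d^G=E_d^\theta$ lies in $\op^\text{gc}$, so its endomorphism space may be computed inside $\op^\text{gc}$. By the definition of the $\theta$-framed operad, $E_d^\theta=\colim\bigl(BG\xrightarrow{\theta}B\Aut(E_d)\hookrightarrow\op^\text{red}\bigr)$; under the equivalence $\op^\text{gc}\simeq\int_{\spc}\operatorname{Fun}(-,\op^\text{red})$ of \Cref{theorem:theoremtangentialstructure} this colimit corresponds to the object $(BG,\theta)$, where $\theta$ is now regarded as the object of $\operatorname{Fun}(BG,\op^\text{red})$ obtained by composing with $B\Aut(E_d)\hookrightarrow\op^\text{red}$, and in particular it factors through $B\Aut(E_d)$. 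Since the Grothendieck construction is the unstraightening of the contravariant functor $B\mapsto\operatorname{Fun}(B,\op^\text{red})$, a self-morphism of $(BG,\theta)$ consists of a map of spaces $g\colon BG\to BG$ together with a morphism $\theta\to g^{*}\theta=\theta\circ g$ in $\operatorname{Fun}(BG,\op^\text{red})$, so that
\[
\operatorname{End}(E_d^G)\ \simeq\ \coprod_{g\,\in\,\map(BG,BG)}\ \map_{\operatorname{Fun}(BG,\,\op^\text{red})}(\theta,\ \theta\circ g).
\]

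Next I would invoke \cite{horel2022two}: every endomorphism of $E_d$ in $\op$ — equivalently in $\op^\text{red}$, as the latter is a full subcategory — is an automorphism, so the full subcategory of $\op^\text{red}$ spanned by $E_d$ is the $\infty$-groupoid $B\Aut(E_d)$ and the inclusion $B\Aut(E_d)\hookrightarrow\op^\text{red}$ is fully faithful. Hence $\operatorname{Fun}(BG,B\Aut(E_d))\hookrightarrow\operatorname{Fun}(BG,\op^\text{red})$ is fully faithful as well, and since both $\theta$ and $\theta\circ g$ factor through $B\Aut(E_d)$ the mapping space $\map_{\operatorname{Fun}(BG,\op^\text{red})}(\theta,\theta\circ g)$ is already computed in $\operatorname{Fun}(BG,B\Aut(E_d))$. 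As $B\Aut(E_d)$ is an $\infty$-groupoid, $\operatorname{Fun}(BG,B\Aut(E_d))\simeq\map(BG,B\Aut(E_d))$ is a space, so this mapping space is the space of paths from $\theta$ to $\theta\circ g$ in $\map(BG,B\Aut(E_d))$, i.e. the space of homotopies witnessing commutativity of the square \eqref{diagramoperad} with $f=g$ and bottom edge the identity. Taking the union over all $g$, the right-hand side of the displayed equivalence is the homotopy fibre over $\theta$ of the map $\map(BG,BG)\to\map(BG,B\Aut(E_d))$, $f\mapsto\theta\circ f$; this is exactly the space of maps $f\colon BG\to BG$ equipped with a homotopy making \eqref{diagramoperad} commute, which is the asserted correspondence.

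The step I expect to be the main obstacle is the collapse in the previous paragraph. A priori a morphism $\theta\Rightarrow\theta\circ g$ in $\operatorname{Fun}(BG,\op^\text{red})$ is only pointwise a self-\emph{map} of $E_d$, so without extra input one obtains merely an inclusion of the space of square-commuting data into $\operatorname{End}(E_d^G)$; in fact, specializing to the trivial group $G$ recovers exactly the statement $\operatorname{End}(E_d)\simeq\Aut(E_d)$, so some input of this strength is unavoidable, and \cite{horel2022two} is precisely what promotes the inclusion to an equivalence. The remaining points are bookkeeping that I would still pin down carefully: that \Cref{theorem:theoremtangentialstructure} does carry $E_d^\theta$ to $(BG,\theta)$ with the stated factorization through $B\Aut(E_d)$, that morphisms of the relevant unstraightening are the pairs $\bigl(g,\ \theta\to\theta\circ g\bigr)$ described above, and that the whole identification is compatible with the forgetful functors to $\spc$, so that it is the commuting square \eqref{diagramoperad}, rather than some variant, that one reads off.
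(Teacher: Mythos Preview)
Your proposal is correct and follows essentially the same route as the paper: identify $E_d^G$ with the object $(BG,\theta)$ in the Grothendieck construction of \Cref{theorem:theoremtangentialstructure}, describe its endomorphisms as pairs of a self-map $g$ of $BG$ and a natural transformation between $\theta$ and $\theta\circ g$ in $\operatorname{Fun}(BG,\op^{\mathrm{red}})$, and then use \cite{horel2022two} to replace $\op^{\mathrm{red}}$ by the groupoid $B\Aut(E_d)$, turning that natural transformation into a homotopy filling the square. Your write-up is in fact a bit more explicit than the paper's (spelling out the full faithfulness of $B\Aut(E_d)\hookrightarrow\op^{\mathrm{red}}$ and the homotopy-fibre description), and your remark that the $G=1$ case shows some input of the strength of \cite{horel2022two} is unavoidable is a nice observation; the only cosmetic discrepancy is the direction of the natural transformation, which is immaterial once one lands in the groupoid $B\Aut(E_d)$.
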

\begin{proof}
By \Cref{theorem:theoremtangentialstructure} we can calculate the mapping space in the Grothendieck construction $\int_{\spc}\operatorname{Fun}(-, \op^\text{red})$.\\
By definition, objects in $\int_{\spc}\operatorname{Fun}(-, \op^\text{red})$ are given by pairs $(S,F)$ where $S \in \spc$ is a space and $F\colon S \rightarrow \op^{\text{red}}$ is a functor. Morphisms between $(S,F)$ and $(S',F')$ are given by maps $f \colon S' \rightarrow S$ and natural transformations $f^*(F) \Rightarrow F'$:
\[
\begin{tikzcd}
S' \ar[r, "f"] \ar[dr, swap, "F'"{name = M}] & S \ar[d, "F"] \ar[Rightarrow,dl, shorten <=5pt, shorten >= 5pt, to=M] \\
& \op^{\text{red}} \mathrlap{.}
\end{tikzcd}
\]
By \Cref{theorem:theoremtangentialstructure}, $E_d^G$ corresponds to $(BG, \theta)\in \int_\mathcal S \operatorname{Fun}(-, \op^{\text{red}})$. Therefore, an endomorphism of $E_d^G$ is given by a map $f \colon BG \rightarrow BG$ and a natural transformation making the following diagram of functors commute:
\begin{equation}\label{diagramwithopred}
\begin{tikzcd}
	BG \ar[r, "f"] \ar[d, "\theta"] & BG \ar[d, "\theta"] \\
	\op^\text{red} \ar[r, "\id"]& \op^\text{red}\mathrlap{.}
\end{tikzcd}
\end{equation}
$\theta$ factors through the subcategory of operads that are equivalent to $E_d$. By \cite[Theorem B]{horel2022two} we have that all endomorphisms of $E_d$ are automorphisms. Therefore, the full subcategory of $\op^{\text{red}}$ consisting of operads equivalent to $E_d$ is the groupoid $B\Aut(E_d)$. Thus, the diagram \eqref{diagramwithopred} of categories is equivalent to the following diagram of spaces that commutes up to specified homotopy:
\[
\begin{tikzcd}
	BG \ar[r, "f"] \ar[d, "\theta"] & BG \ar[d, "\theta"] \\
	B\Aut(E_d) \ar[r, "\id"]& B\Aut(E_d)\mathrlap{.}
\end{tikzcd}
\]
\end{proof}

\subsection{Endomorphisms of the $E_2$ operad}
In \cite[Section 3]{horel2022two} the authors show that every endomorphism of $E_d$ induces a homology isomorphism in the homology of $E_d(n)$ for all $n$ and for all $d$. If $d \geq 3$ this is enough to show that every endomorphism is an automorphism as $E_d(n)$ is simply connected for all $n$.\\
In the case $d=2$, \cite[\nopp 8.5]{horel2017profinite} calculates the endomorphisms of $E_2$ which shows that all endomorphisms of $E_2$ are automorphisms. We will show this result independently of \cite{horel2017profinite} using less complex tools:
\begin{proposition}
	If an endomorphisms of $E_2$ induces a homology isomorphism in each arity, then it is an automorphism.
\end{proposition}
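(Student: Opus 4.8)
The plan is to show that the endomorphism $\phi$ induces a weak homotopy equivalence $\phi_n \colon E_2(n) \to E_2(n)$ in every arity; since $E_2$ is single-colored, this is precisely what it means for $\phi$ to be an equivalence of $\infty$-operads. Each $E_2(n)$ is homotopy equivalent to the ordered configuration space $\mathrm{Conf}_n(\R^2)$, which is aspherical with fundamental group the pure braid group $P_n$, so it suffices to prove that $\phi_n$ is an isomorphism on $\pi_1$ for all $n$.

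I would prove this by induction on $n$, using that $E_2$ is unital. Composing an operation with the essentially unique nullary operation in the $i$-th slot produces canonical ``forget the $i$-th disk'' maps $E_2(n) \to E_2(n-1)$, with which $\phi$ is compatible up to coherent homotopy. These are the Fadell--Neuwirth fibrations; they split and have fiber homotopy equivalent to $\bigvee_{n-1} S^1$, so on fundamental groups one obtains a split short exact sequence $1 \to F_{n-1} \to P_n \to P_{n-1} \to 1$. The cases $n \le 1$ are trivial since $E_2(0) \simeq E_2(1) \simeq *$. For the inductive step, assuming $\phi_{n-1}$ is an equivalence, the short five lemma reduces us to showing that the map $\phi^{\mathrm{fib}}$ induced on the fiber is an isomorphism on $\pi_1 \cong F_{n-1}$ (for $n=2$ this fiber is $E_2(2)\simeq S^1$ itself). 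Comparing the Serre spectral sequences of the two fibrations does most of the work: $\phi_{n-1}$ and $\phi_n$ are homology isomorphisms by hypothesis, and $\pi_1 E_2(n-1) = P_{n-1}$ acts trivially on the homology of the fiber since a pure braid fixes each puncture, so a Zeeman-type comparison shows that $\phi^{\mathrm{fib}}$ is a homology isomorphism.

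Upgrading this homology isomorphism $\phi^{\mathrm{fib}} \colon F_{n-1} \to F_{n-1}$ to an isomorphism is the heart of the matter, and it is where one must use the operad structure rather than just homology: a homology isomorphism between free groups need not be surjective --- for instance $x \mapsto yxy^{-1}$, $y \mapsto xyx^{-1}$ defines an injective homology isomorphism $F_2 \to F_2$ that is not onto. Injectivity of $\phi^{\mathrm{fib}}$ is cheap (Stallings' theorem plus residual nilpotence of free groups, and in any case it follows from surjectivity because free groups are Hopfian), so the whole difficulty is surjectivity. For that I would use compatibility of $\phi$ with the composition maps $\circ_i \colon E_2(n-1) \times E_2(2) \to E_2(n)$: such a composite carries the generator of $\pi_1 E_2(2) \cong \Z$ to a free generator $A_{in}$ of the fiber $F_{n-1}$, and since a homology isomorphism forces $\phi_2$ to act on $\pi_1 E_2(2)$ by $\pm 1$, after composing $\phi$ with the reflection automorphism of $E_2$ if necessary we may assume $\phi_2 \simeq \id$; then $\phi^{\mathrm{fib}}(A_{in}) = g_i A_{in} g_i^{-1}$ for some $g_i \in P_n$. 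If these conjugators can be chosen independent of $i$, then $\im(\phi^{\mathrm{fib}}) = g F_{n-1} g^{-1} = F_{n-1}$ because $F_{n-1}$ is normal in $P_n$, and $\phi^{\mathrm{fib}}$ is onto.

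The main obstacle is exactly this bookkeeping: showing that the conjugators $g_i$ --- which arise from the change of basepoint inherent in any operad endomorphism --- can be taken uniformly, equivalently that $\im(\phi^{\mathrm{fib}})$ is the whole normal subgroup $F_{n-1}$ and not merely a proper free subgroup into which it includes homologically. I expect this requires fixing a system of basepoint configurations in the $E_2(n)$ compatible with all operadic compositions, so that the maps $E_2(2) \to E_2(n)$ assembled from the $\circ_i$ and the symmetric group actions become based maps and the effect of $\phi$ on them is pinned down on the nose up to the single sign $\pm 1$. Everything before that step --- the reduction to $\pi_1$, the inductive scaffolding via the forgetful fibrations, and the spectral sequence comparison --- is routine.
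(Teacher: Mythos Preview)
Your route is far more elaborate than the paper's, and the gap you flag yourself---arranging the conjugators $g_i$ uniformly so that $\phi^{\mathrm{fib}}$ is surjective on $F_{n-1}$---is real: nothing in your sketch with the $\circ_i$ forces the $g_i$ to coincide, and without that the inductive step does not close. The paper bypasses the entire Fadell--Neuwirth induction and spectral-sequence comparison. It uses precisely the tool you mention only in passing (Stallings plus residual nilpotence), but applies it to the whole group rather than to the free fiber: since $E_2(n)\simeq K(PBr_n,1)$ and $\phi_n$ is a homology isomorphism, Stallings gives isomorphisms on all nilpotent quotients $PBr_n/\Gamma_k(PBr_n)$, and residual nilpotence of $PBr_n$ (Falk--Randell) is then invoked to conclude that $\phi_n$ is an isomorphism on $\pi_1$. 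No operadic compositions, no basepoint bookkeeping. Your detour through the fiber is exactly what manufactures the surjectivity obstruction, because free groups admit non-surjective homology-isomorphism endomorphisms, as your own $F_2$ example shows; working directly with $PBr_n$ avoids ever isolating that free piece.

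One honest caveat: your worry about surjectivity is not misplaced. Stallings' theorem in general yields only a \emph{monomorphism} at the transfinite stage $\bigcap_k \Gamma_k$, so residual nilpotence by itself delivers injectivity of $\pi_1(\phi_n)$ but not obviously surjectivity---and your $F_2$ example shows this implication can fail for finitely generated residually nilpotent groups. The paper asserts an isomorphism on $PBr_n/\Gamma_\alpha(PBr_n)$; it would be worth checking whether something specific to pure braid groups beyond Falk--Randell is implicitly being used here, or whether the very surjectivity issue you wrestled with on the fiber is being passed over at the level of $PBr_n$ itself.
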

\begin{proof}
Note that $E_2(n)\simeq K(PBr_n,1)$ where $PBr_n$ denotes the pure braid group on $n$ strands \cite[\nopp 5.1.11]{fresse2017homotopy}. Let $f \colon E_2 \rightarrow E_2$ be an endomorphism that induces an isomorphism on the homology of each arity of $E_2$. By the main theorem of \cite{stallings1965homology} we have that $f$ induces an isomorphism
\[
PBr_n/\Gamma_\alpha(PBr_n) \rightarrow PBr_n/\Gamma_\alpha(PBr_n)
\]
where $\Gamma_\alpha(PBr_n)$ denotes the intersection of the terms in the lower central series of $PBr_n$. However, as the pure braid groups are residually nilpotent by \cite{falk1988pure} we have that $\Gamma_\alpha(PBr_n)$ is trivial which implies that $f$ induces an isomorphism of the pure braid groups. Therefore, $f$ is an automorphism.
\end{proof}

\section{Compact Lie groups and maps on their classifying spaces}
\subsection{Compact Lie groups}
In what follows, we will collect some facts we need on Lie groups and their automorphisms. All Lie groups are assumed to be compact and connected.
\begin{definition}
\begin{enumerate}
	\item A \emph{simple} Lie group $G$ is a non-abelian Lie group without connected normal non-trivial subgroups. Equivalently, they are exactly those Lie groups who have a simple Lie algebra $L(G)$, that is $L(G)$ is a non-abelian Lie algebra with no non-trivial ideals \cite[114]{stillwell2008naive}.
	\item A Lie group is \emph{semi-simple} if its Lie algebra is a direct sum of simple Lie algebras \cite[105]{knapp1996lie}.
\end{enumerate}
\end{definition}
\begin{example}\label{example:simplesemisimple}
\begin{enumerate}
\item For $n\neq 2,4$, the Lie algebra $\mathfrak{so}(n)$ is a simple Lie algebra, that is $\SO(n)$ is simple \cite[33,130]{stillwell2008naive}.
\item $\SO(2)$ is not simple as it is an abelian group.
\item $\SO(4)$ is a semisimple Lie group \cite[75]{stillwell2008naive}.
\end{enumerate}
\end{example}

\begin{definition}
	A \emph{torus} is a Lie group such that $T \cong U(1)^n \cong (S^1)^n$ for some $n$.\\
	A \emph{maximal torus} in $G$ is a subgroup $T \subset G$ such that $T$ is a torus and for any other torus $T'$ with $T \subset T' \subset G$ we have $T = T'$.
\end{definition}
\begin{theorem}[{\cite[\nopp 4.23]{adams1982lectures}}]
	Any two maximal tori in $G$ are conjugate to each other. Therefore, they all have the same dimension.
\end{theorem}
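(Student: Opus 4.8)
The plan is to deduce both assertions from the single statement that, for a fixed maximal torus $T \subseteq G$, one has $\bigcup_{g \in G} gTg^{-1} = G$. Granting this, let $T'$ be another maximal torus. Tori are monothetic, so $T'$ has a topological generator $t'$; writing $t' = gtg^{-1}$ with $g \in G$, $t \in T$ gives $T' = \overline{\langle t'\rangle} = g\,\overline{\langle t\rangle}\,g^{-1} \subseteq gTg^{-1}$, and since $gTg^{-1}$ is a torus while $T'$ is maximal this forces $T' = gTg^{-1}$. The equality of dimensions is then immediate, as conjugation by $g$ is a diffeomorphism. So everything reduces to surjectivity of conjugation onto $G$.

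First I would introduce the map $q \colon (G/T) \times T \to G$, $q(gT, t) = gtg^{-1}$, which is well defined because $T$ is abelian. Its source and target are closed connected manifolds of the same dimension, since $\dim(G/T) + \dim T = \dim G$, and both are orientable: $G$ is parallelizable, $T$ is a torus, and the tangent bundle $G\times_T(\mathfrak g/\mathfrak t)$ of $G/T$ is orientable because the connected group $T$ acts on $\mathfrak g/\mathfrak t$ through $\mathrm{Ad}$ and therefore preserves an orientation. Hence $q$ has a well-defined mapping degree, and a map of nonzero degree between closed connected oriented manifolds of equal dimension is surjective; so it suffices to compute $\deg q$ and see that it does not vanish.

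To compute the degree I would evaluate at a topological generator $t_0$ of $T$ and check it is a regular value. If $q(gT,t) = t_0$ then $t_0 \in gTg^{-1}$, so $T = \overline{\langle t_0\rangle} \subseteq gTg^{-1}$, hence $gTg^{-1} = T$ by equality of dimensions, $g \in N(T)$, and $t = g^{-1}t_0g$ is determined by the class $gT$ in $W := N(T)/T$; the fibre is thus the set $\{(gT, g^{-1}t_0g) : gT \in W\}$, of cardinality $|W|$. Now use the real root-space decomposition $\mathfrak g = \mathfrak t \oplus \bigoplus_\alpha \mathfrak g_\alpha$, where $T$ acts on each real $2$-plane $\mathfrak g_\alpha$ by rotations through a nontrivial character. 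A topological generator $t_0$ lies in the kernel of no nontrivial character, so $\mathrm{Ad}(t_0)$ fixes no nonzero vector in any $\mathfrak g_\alpha$ and $\mathrm{Ad}(t_0) - \id$ is invertible on $\mathfrak g/\mathfrak t$. Identifying tangent spaces via left translations, the differential of $q$ at $(eT, t_0)$ becomes the block map $(X, H) \mapsto \big((\mathrm{Ad}(t_0^{-1}) - \id)X,\ H\big)$ on $(\mathfrak g/\mathfrak t) \oplus \mathfrak t$; it is an isomorphism, with determinant $\prod_\alpha \det\big((\mathrm{Ad}(t_0^{-1}) - \id)|_{\mathfrak g_\alpha}\big)$, each factor equal to $2 - 2\cos$ of a nonzero angle and hence positive. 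So $t_0$ is a regular value with local degree $+1$ at $(eT, t_0)$, and since $q$ is equivariant — left translation on $G/T$, conjugation on $G$ — with these acting by orientation-preserving diffeomorphisms because $G$ is connected, every point of the fibre contributes $+1$; therefore $\deg q = |W| \geq 1$.

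The step I expect to be the main obstacle is precisely this orientation bookkeeping — guaranteeing that the $|W|$ local contributions share a sign rather than cancelling — which is what forces systematic use of the connectedness of $T$ (orientability of $G/T$, positivity of $\det(\mathrm{Ad}(t_0) - \id)$ via the pairing of roots) and of $G$ (inner automorphisms orientation-preserving). A more qualitative route avoiding the sign computation: for each $g \in G$ the translation $xT \mapsto gxT$ of $G/T$ is homotopic to the identity since $G$ is connected, so its Lefschetz number equals $\chi(G/T)$; the $T$-fixed points of $G/T$ are exactly $N(T)/T = W \neq \emptyset$, and as the Euler characteristic of a compact $T$-manifold equals that of its fixed-point set, $\chi(G/T) = |W| > 0$, so the translation has a fixed point $xT$, i.e.\ $x^{-1}gx \in T$. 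Either argument yields $\bigcup_{g\in G} gTg^{-1} = G$, and with it the theorem.
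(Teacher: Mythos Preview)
The paper does not give its own proof of this theorem; it merely records the statement with a citation to Adams' \emph{Lectures on Lie Groups}, \S4.23, and uses it as background. So there is nothing in the paper to compare against.

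That said, your argument is correct, and it is essentially the classical proof that appears in the cited reference: one shows that the conjugation map $q\colon (G/T)\times T \to G$ is surjective by computing its mapping degree to be $|W|$, using a topological generator of $T$ as a regular value and the root-space decomposition to see that the local degrees are all $+1$. Your handling of the orientation bookkeeping --- orientability of $G/T$ via the connectedness of $T$, positivity of $\det(\mathrm{Ad}(t_0^{-1})-\id)$ on each real root plane, and transport of local degrees along the fibre via left translation and inner automorphism (both orientation-preserving because $G$ is connected) --- is the standard one and is sound. The alternative Lefschetz route you sketch is also valid; the only point worth flagging is that the finiteness of $W=N(T)/T$ is not assumed in advance but follows once you observe that the $T$-fixed locus of $G/T$ is a compact $0$-manifold (its tangent space at $eT$ is the $\mathrm{Ad}(T)$-fixed part of $\mathfrak g/\mathfrak t$, which vanishes by maximality of $T$), so $\chi(G/T)=|W|$ is a well-defined positive integer.
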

\begin{example}\label{exampletorus}
A maximal torus in $\SO(n)$ can be described as follows:
\begin{enumerate}
	\item In $\SO(2n)$, a maximal torus is given by:
		\[
		T := \left \{ \left. \begin{pmatrix}
			D_1 &  & \\
			 & \ddots & \\
			 & & D_n
		\end{pmatrix} ~ \right\vert D_j = \begin{pmatrix}
			\cos(2 \pi i x_j) & -\sin(2 \pi i x_j)\\
			\sin(2 \pi i x_j) & \cos(2\pi i x_j)
		\end{pmatrix} \text { and } x_j \in \R \right \}.
		\]
		Therefore, $T \cong (S^1)^n$
	\item In $\SO(2n+1)$ a maximal torus is the image of the inclusion $T \subset \SO(2n) \subset \SO(2n+1)$, i.e. we obtain that maximal tori in $\SO(2n+1)$ are $n$-dimensional.
\end{enumerate}
Proofs and details are given in \cite[4.19 and 4.20]{adams1982lectures}
\end{example}

\begin{definition}[{\cite[\nopp 4.11]{adams1982lectures}}]
	Let $T \subset G$ be a maximal torus and $e \in G$ be the unit element. The \emph{lattice} is the subset of $L(T)$ given by $\text{exp}^{-1}(e)$ where $\exp \colon L(T) \rightarrow T$.
\end{definition}

\begin{definition}
	The \emph{Weyl group} $W$ of $G$ is given by $N_G(T)/T$ where $N_G(T)$ denotes the normalizer of $T$ in $G$.\\
Equivalently, the Weyl group is the group of all automorphisms of $T$ which are the restrictions of inner automorphisms of $G$ 	\cite[\nopp 4.29]{adams1982lectures}.
\end{definition}

\begin{example} We describe the Weyl group of $\SO(n)$:
\begin{enumerate}
	\item The Weyl group of $\SO(2n+1)$ is given by the group of permutations $\varphi$ of the set $\{-n, \dots, -1,1,\dots, n\}$ such that $\varphi(-r) = - \varphi(r)$. Equivalently, it is the semidirect product $G(n) = (\Z/2\Z)^n\rtimes \Sigma_n$.
	\item The Weyl group of $\SO(2n)$ is $SG(n)$, which is the subgroup of $G(n)$ of even permutations. In terms of the semidirect products, it is the subgroup of $(\Z/2\Z)^n \rtimes \Sigma_n$ consisting of all elements such that the number of -1's in $(\Z/2\Z)^n$ is even.
\end{enumerate}
Proofs and details can be found in \cite[Theorem 3.6]{brocker2013representations}.
\end{example}

\subsection{Maps on classifying spaces of Lie groups}
We recall some results from \cite{jackowski1992homotopy, jackowski1995self}.
Self maps of the classifying space of simple Lie groups are induced by maps on the underlying Lie group and the unstable Adams operations:
\begin{definition}
	Let $G$ be a compact connected Lie group. An \emph{unstable Adams operation of degree $k$} is a map $\psi^k \colon BG \rightarrow BG$ that induces multiplication by $k^i$ in degree $2i$ of rational cohomology.
\end{definition}
\begin{theorem}
For any compact connected Lie group there is up to homotopy at most one unstable Adams operation of degree $k$ \cite[Theorem 1]{jackowski1992homotopy}.
\end{theorem}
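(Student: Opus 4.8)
The plan is to prove the stronger statement that a degree‑$k$ unstable Adams operation is determined up to homotopy by its rationalization together with its $p$‑completions, and then to check that both of these are forced. Since $G$ is connected, $BG$ is simply connected of finite type, hence is the homotopy pullback of
\[
BG_{\mathbb Q}\longrightarrow \Bigl(\textstyle\prod_p BG^\wedge_p\Bigr)_{\mathbb Q}\longleftarrow \textstyle\prod_p BG^\wedge_p ,
\]
and applying $\map(BG,-)$ turns this into a homotopy pullback of mapping spaces. Given two unstable Adams operations $\psi,\psi'$ of the same degree $k$, the Mayer--Vietoris sequence of this pullback shows $\psi\simeq\psi'$ provided (i) $\psi_{\mathbb Q}\simeq\psi'_{\mathbb Q}$, (ii) $\psi^\wedge_p\simeq(\psi')^\wedge_p$ for all $p$, and (iii) the correction term $\pi_1\map(BG,Z)$ vanishes, where $Z:=(\prod_p BG^\wedge_p)_{\mathbb Q}$. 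For (iii): since $G_{\mathbb Q}$ is a product of odd‑dimensional spheres, $\pi_* Z$ is concentrated in even degrees, so in the spectral sequence $E_2^{s,t}=H^s(BG;\pi_t Z)\Rightarrow \pi_{t-s}\map(BG,Z)$ the groups contributing to $\pi_1$, namely $H^s(BG;\pi_{s+1}Z)$, involve only the odd‑degree rational cohomology of $BG$, which vanishes. So it suffices to establish (i) and (ii).

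Claim (i) is elementary: $H^*(BG;\mathbb Q)$ is polynomial on generators in even degrees, so $BG_{\mathbb Q}$ is a product of rational Eilenberg--MacLane spaces, and a map into such a space is determined up to homotopy by its effect on rational cohomology — which by definition is the same (multiplication by $k^i$ in degree $2i$) for every degree‑$k$ unstable Adams operation. Claim (ii) is the heart of the matter, and is exactly the homotopy‑theoretic analysis of self‑maps of $BG^\wedge_p$ due to Jackowski--McClure--Oliver (and, in the language of $p$‑compact groups, Dwyer--Wilkerson). One uses the homotopy‑colimit decomposition of $BG^\wedge_p$ over a suitable category of subgroups of $G$ (the subgroup, resp.\ the centralizer, decomposition); this identifies $\map(BG^\wedge_p,BG^\wedge_p)$ with a homotopy limit over the corresponding indexing category, whose Bousfield--Kan spectral sequence computing $\pi_0$ has $E_2$‑terms the higher derived limits $\varprojlim^s$ of the homotopy‑group functors of the relevant mapping spaces, and Jackowski--McClure--Oliver prove these vanish for $s\ge 1$. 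Hence such a self‑map is pinned down by coherent compatible data on the subgroups, and in particular by its restriction to a maximal torus $T$; being the restriction of a map out of $BG$, that restriction is invariant under the Weyl group $W$, and a $W$‑invariant self‑map of $BT^\wedge_p$ that induces multiplication by $k$ on $H^2$ is multiplication by $k$ on the lattice, which is unique.

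The main obstacle is precisely this vanishing of the higher derived limits over the orbit category of $p$‑subgroups: it is the technical core of Jackowski--McClure--Oliver, and I would quote it rather than reprove it, since everything else is either formal (the fracture‑square bookkeeping) or a short cohomology computation (the rational case, and the reduction to the maximal torus). As a sanity check, when $G$ is a torus the whole argument collapses to the observation that the self‑maps of $(BS^1)^\wedge_p$ form $\mathbb Z_p$ (and those of $B(S^1)^n$ the integer matrices), so the degree condition singles out multiplication by $k$.
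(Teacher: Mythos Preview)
The paper does not give a proof of this statement; it simply quotes it from \cite[Theorem~1]{jackowski1992homotopy} and uses it as a black box in the classification of self-maps of $B\mathrm{SO}(n)$. So there is no ``paper's own proof'' to compare your sketch against.

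That said, your outline is essentially the strategy of the cited Jackowski--McClure--Oliver result: reduce via the arithmetic fracture square to the rational case (handled by the polynomial structure of $H^*(BG;\mathbb Q)$) and the $p$-complete case, and treat the latter via the subgroup decomposition of $BG^\wedge_p$ together with the vanishing of the higher derived limits, which pins the map down by its restriction to the maximal torus. Two remarks. First, your last step is slightly loose: from the definition of an unstable Adams operation one only knows that the restriction to $BT$ acts by $k^i$ on the $W$-invariants in degree $2i$, not that it is literally multiplication by $k$ on $H^2(BT)$; extracting ``multiplication by $k$ up to $W$'' on the lattice from this is part of what the JMO analysis of admissible maps actually does. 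Second, since you (rightly) say you would quote the $\varprojlim{}^s$-vanishing rather than reprove it, you are in effect citing the theorem itself --- which is exactly what the paper does.
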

The smash product of monoids with a zero element is given by
\[
M_1 \wedge M_2 = (M_1 \times M_2)/\langle(m_1, 0) \sim (0,0) \sim (0,m_2), m_i \in M_i\rangle
\]
\begin{theorem}[\cite{jackowski1992homotopy}, Theorem 2] \label{theoremonclassifcationofselfmapsbg}
	Let $G$ be a simple compact connected Lie group with Weyl group $W$. Then, there is an isomorphism of monoids with a zero element
	\[
	\operatorname{End}(G)/\operatorname{Inn}(G) \wedge \{k \geq 0 \mid k= 0 \text{ or } \mathrm{gcd}(k, |W|) = 1\} \rightarrow [BG, BG]
	\]
	sending $(\alpha ,k)$ to $\psi^k \circ B\alpha$. Here, $\operatorname{Inn}(G)$ denotes the inner automorphisms of $G$ and $\psi^k$ denotes an unstable Adams operation of degree $k$.
\end{theorem}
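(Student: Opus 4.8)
\begin{hproof}
The plan is to follow the proof of Jackowski, McClure and Oliver \cite{jackowski1992homotopy}, which proceeds prime by prime and reassembles via the arithmetic fracture square of the nilpotent finite-type space $BG$: a self-map of $BG$ is the same datum as a self-map of each $p$-completion $BG^{\wedge}_p$ together with a compatible self-map of $BG_{\mathbb{Q}}$. As a preliminary, the source monoid simplifies: for $G$ simple, $\operatorname{End}(G)/\operatorname{Inn}(G)=\operatorname{Out}(G)\sqcup\{0\}$, since a nonzero Lie-group endomorphism $\alpha\colon G\to G$ has differential a nonzero --- hence, by simplicity of $L(G)$, bijective --- endomorphism of $L(G)$, so $\alpha$ is an isogeny $G\to G$, and an isogeny with nontrivial kernel would strictly decrease the order of the finite centre, so $\alpha$ is an automorphism. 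The left-hand side of the claimed isomorphism is thus $\big(\operatorname{Out}(G)\sqcup\{0\}\big)\wedge\{k\ge 0 : k=0\text{ or }\gcd(k,|W|)=1\}$.

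The technical core is the computation of $[BG,BG^{\wedge}_p]$ for a fixed prime $p$. Here I would use the homotopy decomposition of $BG^{\wedge}_p$ as the $p$-completion of the homotopy colimit over the orbit category $\mathcal{R}_p(G)$ of $p$-stubborn (equivalently $p$-radical) subgroups $P\le G$ of the functor $G/P\mapsto EG\times_G(G/P)\simeq BP$, due to \cite{jackowski1992homotopy}. Mapping out of this homotopy colimit yields a Bousfield–Kan spectral sequence with $E_2$-page $\lim^{i}_{\mathcal{R}_p(G)}\pi_j\operatorname{Map}\!\big(EG\times_G(G/P),\,BG\big)^{\wedge}_p$ converging to the homotopy of $\operatorname{Map}(BG,BG)^{\wedge}_p$. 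Two inputs make this tractable: the Dwyer–Zabrodsky–Notbohm equivalence $\operatorname{Map}(BP,BG)^{\wedge}_p\simeq\coprod_{\rho}\big(BC_G(\rho P)\big)^{\wedge}_p$, a consequence of the Sullivan conjecture, with $\rho$ running over conjugacy classes of homomorphisms $P\to G$; and the vanishing $\lim^{i}_{\mathcal{R}_p(G)}(\,\cdot\,)=0$ for $i\ge 1$ of the relevant higher limits over $\mathcal{R}_p(G)$. Granting both, the spectral sequence collapses onto its edge homomorphism, so a $p$-complete self-map of $BG$ is exactly a coherent family of $p$-complete maps $BP\to BG^{\wedge}_p$; by Dwyer–Zabrodsky–Notbohm this amounts, up to conjugacy, to an isogeny $G\to G$ followed by a $p$-adic unstable Adams operation $\psi^k$, unique by \cite[Theorem~1]{jackowski1992homotopy}. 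One obtains $[BG,BG^{\wedge}_p]\cong\big(\operatorname{Out}(G)\sqcup\{0\}\big)\times D_p$, where $D_p$ is the set of realisable degrees: all of $\mathbb{Z}_p$ together with $0$ when $p\nmid|W|$, but only the $p$-adic units $\mathbb{Z}_p^{\times}$ together with $0$ when $p\mid|W|$ --- the point being that $W$-equivariance obstructs descending multiplication-by-$k$ from $BT^{\wedge}_p$ unless $k$ is a $p$-adic unit.

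Reassembling along the fracture square, the compatible rational self-map is determined by the family $(\phi_p)_p$, which lifts precisely when it is compatible: the outer automorphism must be a single $\alpha\in\operatorname{Out}(G)$ (as $\operatorname{Out}(G)$ acts faithfully on $H^*(BG;\mathbb{Q})$), and the Adams degrees must glue to one rational number lying in $D_p$ for every $p$, that is, an integer $k$ which is a $p$-adic unit whenever $p\mid|W|$; after absorbing the sign of a negative degree into the $\operatorname{Out}(G)$ factor when necessary, this is exactly the index set $k\ge 0$ with $k=0$ or $\gcd(k,|W|)=1$. The trivial endomorphism $0$ and the degree $0$ both yield the constant map $BG\to BG$, which forces the two monoids-with-zero to be smashed rather than multiplied; and $(\psi^{k}\circ B\alpha)\circ(\psi^{k'}\circ B\beta)\simeq\psi^{kk'}\circ B(\alpha\beta)$, since $B\alpha$ commutes with $\psi^{k'}$ up to homotopy by the uniqueness theorem, so $(\alpha,k)\mapsto\psi^{k}\circ B\alpha$ is a homomorphism of monoids with a zero element; its bijectivity is precisely the content of the fracture-square analysis.

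The main obstacle is the higher-limit vanishing $\lim^{i}_{\mathcal{R}_p(G)}(\,\cdot\,)=0$ for $i\ge 1$ appearing in the second step: this is where \cite{jackowski1992homotopy} does its real work, passing through Oliver's computations of the cohomology of orbit categories of $p$-groups and a careful enumeration of the conjugacy classes of $p$-stubborn subgroups of $G$. By comparison, the rational bookkeeping, the collapse of the spectral sequence, and the arithmetic gluing are formal.
\end{hproof}
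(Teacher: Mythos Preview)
The paper does not prove this theorem; it is quoted as \cite[Theorem~2]{jackowski1992homotopy} and used as a black box in the subsequent corollaries, so there is no ``paper's own proof'' to compare against. Your sketch is a faithful outline of the Jackowski--McClure--Oliver argument itself: the $p$-stubborn decomposition of $BG^{\wedge}_p$, the Sullivan-conjecture input identifying $\operatorname{Map}(BP,BG^{\wedge}_p)$, the higher-limit vanishing that collapses the obstruction spectral sequence, and the arithmetic-square reassembly are exactly the ingredients of \cite{jackowski1992homotopy}, and you correctly flag the higher-limit vanishing as the place where the real work happens.

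One small caution: the step ``absorbing the sign of a negative degree into the $\operatorname{Out}(G)$ factor when necessary'' is not automatic. It works because for every simple $G$ either $-1$ already lies in the Weyl group (so $\psi^{-1}\simeq\mathrm{id}$) or there is an outer automorphism acting as $-1$ on the maximal torus; this dichotomy is a case check on the root systems and is part of the content of \cite{jackowski1992homotopy}, not a formality. Apart from that, your outline is sound.
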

\begin{corollary}\label{cor:so2n+1maps}
	Let $n \geq 1$. Then, we have an isomorphism
	\begin{gather*}
	\{\operatorname{triv}, \id_{\SO(2n+1)}\} \wedge \{k \geq 0 \mid k= 0 \text{ or } \mathrm{gcd}(k, 2^n \cdot n!) = 1\} \\
	\rightarrow [B\SO(2n+1), B\SO(2n+1)]
	\end{gather*}
	where $\operatorname{triv}$ denotes the trivial self map of $\SO(2n+1)$.
\end{corollary}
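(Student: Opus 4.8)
The plan is to specialize \Cref{theoremonclassifcationofselfmapsbg} to $G = \SO(2n+1)$. For this to apply I need three inputs: that $\SO(2n+1)$ is a simple compact connected Lie group, that its Weyl group has order $2^n\cdot n!$, and that the monoid with zero $\operatorname{End}(\SO(2n+1))/\operatorname{Inn}(\SO(2n+1))$ is the two-element monoid $\{\operatorname{triv}, \id\}$ with zero element $\operatorname{triv}$. The first two are essentially recorded already: since $2n+1$ is odd and at least $3$ when $n\geq 1$, it is never $2$ or $4$, so $\SO(2n+1)$ is simple by \Cref{example:simplesemisimple}; and the Weyl group of $\SO(2n+1)$ is $(\Z/2\Z)^n\rtimes\Sigma_n$, of order $2^n\cdot n!$.

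The real content is the computation of the endomorphism monoid. First I would note that $\SO(2n+1)$ has trivial center --- $-I$ has determinant $(-1)^{2n+1}=-1$, so it lies in $\LO(2n+1)\setminus\SO(2n+1)$ --- which gives $\LO(2n+1)\cong\SO(2n+1)\times\{\pm I\}$, so that conjugation by any orthogonal matrix agrees with conjugation by an element of $\SO(2n+1)$; combined with the fact that the Dynkin diagram of type $B_n$ has no nontrivial automorphisms, this yields $\operatorname{Aut}(\SO(2n+1))=\operatorname{Inn}(\SO(2n+1))$. Then, for an arbitrary continuous endomorphism $\varphi$, the differential $d\varphi$ is an endomorphism of the simple Lie algebra $\mathfrak{so}(2n+1)$, so $\kernel(d\varphi)$ is an ideal and hence is either all of $\mathfrak{so}(2n+1)$, forcing $\varphi=\operatorname{triv}$, or is zero, in which case $d\varphi$ is an isomorphism and $\varphi$ is a surjection with discrete --- hence central, hence trivial --- kernel, i.e. an automorphism. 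Thus $\operatorname{End}(\SO(2n+1)) = \operatorname{Inn}(\SO(2n+1))\sqcup\{\operatorname{triv}\}$, and quotienting by $\operatorname{Inn}$ leaves exactly $\{\operatorname{triv},\id\}$, with $\operatorname{triv}$ as the absorbing element.

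Feeding these three facts into \Cref{theoremonclassifcationofselfmapsbg} produces the stated isomorphism, with $(\alpha,k)\mapsto\psi^k\circ B\alpha$. The step I expect to take the most care is the identification of the endomorphism monoid: ruling out outer automorphisms (type $B_n$ has no diagram symmetry, together with the centerlessness observation about $\LO(2n+1)$) and ruling out endomorphisms with nontrivial finite kernel or proper image, which is exactly where simplicity of $\mathfrak{so}(2n+1)$ and the triviality of $Z(\SO(2n+1))$ are both used.
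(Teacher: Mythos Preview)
Your proposal is correct and follows the same overall strategy as the paper: apply \Cref{theoremonclassifcationofselfmapsbg} after checking simplicity, computing $|W|=2^n\cdot n!$, and identifying $\operatorname{End}(\SO(2n+1))/\operatorname{Inn}(\SO(2n+1))$ with $\{\operatorname{triv},\id\}$. The only genuine difference is in this last identification. The paper observes that an endomorphism of $\SO(2n+1)$ modulo inner automorphism is the same datum as an isomorphism class of $(2n+1)$-dimensional real representations of $\SO(2n+1)$, and then cites \cite[Theorem 7.7]{adams1982lectures} for the fact that the only irreducibles of dimension at most $2n+1$ are the trivial and the standard representation, so the only $(2n+1)$-dimensional representations are the all-trivial one and the standard one. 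Your argument---simplicity of $\mathfrak{so}(2n+1)$ forces $d\varphi$ to be zero or an isomorphism, centerlessness of $\SO(2n+1)$ kills any discrete kernel, and the absence of $B_n$ Dynkin symmetries makes every automorphism inner---is more self-contained and avoids the external representation-theoretic input. It is also closer in spirit to the paper's subsequent treatment of the even case, where $\operatorname{Out}(\SO(2n))$ is read off from the Dynkin diagram of type $D_n$.
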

\begin{proof}
By \Cref{example:simplesemisimple}, $\SO(2n+1)$ is a simple Lie group. Therefore, the above theorem applies and we need to calculate the right-hand side in \Cref{theoremonclassifcationofselfmapsbg}.\\
$\operatorname{End}(\SO(2n+1))/\operatorname{Inn}(\SO(2n+1))$ only consists of the identity map and the trivial map: A smooth map $\SO(2n+1) \rightarrow \SO(2n+1)$ representing an element in $\operatorname{End}(\SO(2n+1))/\operatorname{Inn}(\SO(2n+1))$ corresponds exactly to a $2n+1$-dimensional representation of $\SO(2n+1)$. By \cite[Theorem 7.7]{adams1982lectures} there are only two irreducible representations of dimension less than or equal to $2n+1$ which are given by the trivial representation and the identity.\\
Furthermore, the Weyl group of $\SO(2n+1)$ is given by $G(n) = (\Z/2\Z)^n\rtimes \Sigma_n$ and has $2^n \cdot n!$ elements.
\end{proof}

\begin{corollary}
Let $n \geq 3$. Then, we have an isomorphism
\begin{gather*}
	\{\operatorname{triv}, \id_{\SO(2n)}, \operatorname{r} \} \wedge \{k \geq 0 \mid k= 0 \text{ or } \mathrm{gcd}(k, 2^{n-1} \cdot n!) = 1\} 
	\\ \rightarrow [B\SO(2n), B\SO(2n)]
\end{gather*}
where $\operatorname{triv}$ denotes the trivial self map of $\SO(2n)$ and $r$ the conjugation by the diagonal matrix $\operatorname{diag}(-1, 1, \dots, 1)$.
\end{corollary}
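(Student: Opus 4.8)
The plan is to follow the template of \Cref{cor:so2n+1maps}. Since $2n \neq 2,4$ whenever $n \geq 3$, \Cref{example:simplesemisimple} shows that $\SO(2n)$ is a simple compact connected Lie group, so \Cref{theoremonclassifcationofselfmapsbg} applies and it remains to identify the two smash factors. The Weyl group of $\SO(2n)$ is $SG(n)$, the index-two subgroup of $G(n) = (\Z/2\Z)^n \rtimes \Sigma_n$ of even signed permutations, so $|W| = 2^{n-1} n!$; this is exactly the Adams-operation factor $\{k \geq 0 \mid k = 0 \text{ or } \mathrm{gcd}(k, 2^{n-1} n!) = 1\}$.

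It remains to compute $\operatorname{End}(\SO(2n))/\operatorname{Inn}(\SO(2n))$, and the plan is to show it equals $\{\operatorname{triv}, \id, r\}$. As in the odd case, composing an endomorphism $\phi \colon \SO(2n) \to \SO(2n)$ with the standard inclusion $\SO(2n) \hookrightarrow \GL_{2n}(\R)$ produces a $2n$-dimensional real representation of $\SO(2n)$, and by \cite[Theorem 7.7]{adams1982lectures} the only irreducible real representations of $\SO(2n)$ of dimension at most $2n$ are the trivial one and the standard one $V$. Thus the representation attached to $\phi$ is either $2n$ copies of the trivial representation, which forces $\phi = \operatorname{triv}$, or a single copy of $V$. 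In the latter case an intertwiner realizes $\phi$ as conjugation by some $A \in \GL_{2n}(\R)$; since $V$ is absolutely irreducible (as $n \geq 3$), $A$ is a scalar multiple of an element $B \in \LO(2n)$, so $\phi$ is conjugation by $B$. If $\det B = 1$ then $\phi$ is inner; if $\det B = -1$ then $B$ equals $\operatorname{diag}(-1,1,\dots,1)$ times an element of $\SO(2n)$, so $\phi$ differs from $r$ by an inner automorphism. Finally $\operatorname{triv}$, $\id$, $r$ represent three distinct classes: $r$ is genuinely outer because by Schur the centralizer of $\SO(2n)$ in $\GL_{2n}(\R)$ is the scalars, and $\operatorname{diag}(-1,1,\dots,1)$ is not a scalar multiple of any element of $\SO(2n)$ (its determinant is negative); moreover $r \circ r = \id$ since $\operatorname{diag}(-1,1,\dots,1)^2 = I$, so $\{\operatorname{triv}, \id, r\}$ is closed under composition with $\operatorname{triv}$ as zero element. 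Feeding this and $|W| = 2^{n-1} n!$ into \Cref{theoremonclassifcationofselfmapsbg} yields the asserted isomorphism.

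I expect the main obstacle to be the endomorphism computation, with two points needing genuine care. First, Adams' low-dimensional classification must be invoked so as to rule out every irreducible of dimension strictly between $1$ and $2n$; in particular, for $n = 4$ one must check that triality does not contribute an extra $8$-dimensional representation of this $\SO(8)$, which it does not, since the two half-spin representations of $\Spin(8)$ descend only to the other two $\Z/2$-quotients of $\Spin(8)$ and not to the orthogonal one. Second, one must cleanly separate the outer class $r$ from the inner automorphisms even though $r$ fixes the standard representation up to isomorphism, so that the monoid genuinely has three elements rather than two. The simplicity of $\SO(2n)$ for $n \geq 3$, the order of $SG(n)$, and the final assembly of the smash product are routine.
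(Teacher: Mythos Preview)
Your proof is correct and reaches the same conclusion as the paper, but the computation of $\operatorname{End}(\SO(2n))/\operatorname{Inn}(\SO(2n))$ proceeds along genuinely different lines.

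The paper computes $\operatorname{Out}(\SO(2n))$ via Dynkin-diagram automorphisms of $D_n$: for $n \neq 4$ there are exactly two, realized by $\id$ and $r$; for $n = 4$ the diagram has six automorphisms, and the paper then argues separately that the order-three triality automorphism of $\mathfrak{so}(8)$ does not integrate to an automorphism of $\SO(8)$, using a nontrivial argument about conjugacy classes of $\SO(7)$- and $\Spin(7)$-subgroups of $\SO(8)$ (citing Varadarajan). Your approach bypasses Dynkin diagrams entirely: you observe that any endomorphism of $\SO(2n)$ \emph{is} a $2n$-dimensional real representation, classify those by listing the irreducibles of dimension at most $2n$, and then recover the intertwiner as an element of $\LO(2n)$ via Schur's lemma. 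The triality case $n=4$ is then dispatched by the one-line observation that the two half-spin representations of $\Spin(8)$ have the wrong central character to descend to the orthogonal quotient $\SO(8)$, which is considerably lighter than the paper's subgroup argument. Conversely, the paper's Dynkin-diagram route avoids any case-by-case check of low-dimensional irreducibles and rests on more standard structure theory.

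One small caution: make sure that \cite[Theorem 7.7]{adams1982lectures} really does cover the type-$D$ groups, since in the paper it is invoked only for $\SO(2n+1)$. The underlying fact you need---that $\SO(2n)$ for $n \geq 3$ has no nontrivial irreducible real representation of dimension strictly between $1$ and $2n$, and only the standard one in dimension $2n$---is certainly true, but you may want a different pinpoint citation or a short direct argument (e.g.\ via the Weyl dimension formula for $D_n$ together with the observation that the half-spin weights do not lie in the weight lattice of $\SO(2n)$).
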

\begin{proof}
By \Cref{example:simplesemisimple}, $\SO(2n)$ is a simple Lie group. Therefore, we can again apply \Cref{theoremonclassifcationofselfmapsbg} to calculate $[B\SO(2n),B\SO(2n)]$.\\
We first want to calculate the outer autmorphisms $\operatorname{Out}(\SO(2n))$. By \cite[Proposition D.40]{fulton2013representation}, outer automorphisms of the Lie algebra $\mathfrak{so}(2n)$ corresponds to automorphisms of its Dynkin diagram. For $n\neq 4$ we therefore obtain that $\mathfrak{so}(2n)$ has two outer automorphisms. Both of these come from outer automorphisms of $\SO(2n)$: Identity and conjugation by $\operatorname{diag}(-1,1,\dots, 1)$.\\
For $n = 4$, $\mathfrak{so}(8)$ has 6 outer automorhisms some of which are of order 3. This automorphism corresponds to the triality automorphism $ t \colon \mathfrak{so}(8) \rightarrow \mathfrak{so}(8)$  which is described in \cite[Section 5]{cartan1925principe}, \cite[Section 1]{mikosz2015triality}. To show that this outer automorphism does not induce an automorphism of $\SO(8)$ we follow the proof strategy from \cite{stackexchangeso8}. By \cite[Lemma 4]{varadarajan2001spin}, $\SO(8)$ has a subgroup isomorphic to $\SO(7)$ that is unique up to conjugation. Let's denote this conjugation class by $\Sigma_0$. Again by \cite[Theorem 3]{varadarajan2001spin}, $\SO(8)$ has exactly two conjugation classes of subgroups that are isomorphic to $\text{Spin}(7)$. We denote those conjugation classes by $\Sigma_1, \Sigma_2$.
Note that $t$ does lift to an automorphism $\tilde T \colon \text{Spin}(8) \rightarrow \text{Spin}(8)$. Furthermore, by \cite[Theorem 5]{varadarajan2001spin} the $\Sigma_i$ lift to three distinct conjugacy classes in $\text{Spin}(8)$, denoted by $\tilde \Sigma_i$ and $\{ \id, \tilde T, \tilde T^2\}$ acts transitively on the $\tilde \Sigma_i$. Now, if $t$ were to induce an automorphism on $\SO(8)$ it would also need to act transitively on the $\Sigma_i$. However, $\text{Spin}(7)$ is simply connected and $\SO(7)$ is not so this can't be.\\
Now that we've calculated the outer automorphisms of $\SO(2n)$ note that 
\[
\operatorname{End}(\SO(2n))/\operatorname{Inn}(\SO(2n)) = \text{Out}(\SO(2n)) \cup \{\text{triv}\}.
\]
Furthermore, the Weyl group of $\SO(2n)$ has $2^{n-1}\cdot n!$ elements and the statement follows.
\begin{figure}
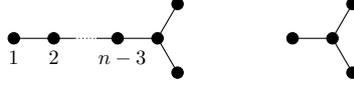

	\dynkin[scale = 1.5, labels = {1,2,,n-3}] D{} \hspace{1cm}
	\dynkin[scale = 1.5] D4
	\caption{Dynkin diagrams of $\mathfrak{so}(2n)$ and $\mathfrak{so}(8)$. Their construction is described in \cite[Theorem 21.11]{fulton2013representation}}.
\end{figure}
\end{proof}
The case for general Lie groups is more complicated. For example, there is no general classification of self maps of $BG$ if $G$ is a semi-simple Lie group.
However, we can understand them by looking at how such a map acts on the classifying space of its maximal torus:
\begin{theorem}[\cite{adams1976maps}, Theorem 1.1]\label{diagramtorus}
	Let $G$ be a compact connected Lie group with maximal torus $T$ and $f \colon BG \rightarrow BG$ be a continuous map. Then, there exists a group homomorphism $k \colon T \rightarrow T$ such that the following diagram commutes
\begin{equation}
	\begin{tikzcd}
	H^*(B\SO(2n); \Q) \ar[d, "i^*"] & H^*(B\SO(2n);\Q) \ar[l,"f^*"] \ar[d,"i^*"] \\
	H^*(BT;\Q) & H^*(BT;\Q) \ar[l,"Bk^*"]\mathrlap{.}
	\end{tikzcd}
\end{equation}
\end{theorem}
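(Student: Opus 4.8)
The plan is to follow Adams--Mahmud \cite{adams1976maps} and reduce the statement, via Borel's theorem, to a question about sections of a $G/T$-bundle, and then to the realizability of self-maps of classifying spaces. Write $i \colon T \hookrightarrow G$ for the inclusion and set $h := f \circ Bi \colon BT \to BG$. Since $BT$ is a finite product of copies of $\C P^\infty = K(\Z,2)$, the natural map $\operatorname{Hom}(T,T) \to [BT,BT]$ is a bijection, so every self-map of $BT$ is homotopic to $Bk$ for a unique group homomorphism $k \colon T \to T$. Hence it is enough to lift $h$, up to homotopy, through the fibration $Bi \colon BT \to BG$, whose fibre is $G/T$: equivalently, forming the homotopy pullback $E := BT \times_{BG} BT$ of $h$ and $Bi$, it is enough to produce a section of the $G/T$-bundle $E \to BT$ obtained by pulling $Bi$ back along $h$. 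Indeed, composing such a section with the other projection $E \to BT$ gives a map $\sigma \colon BT \to BT$ with $Bi \circ \sigma \simeq h$, and then the homomorphism $k$ with $\sigma \simeq Bk$ satisfies $(Bk)^* \circ i^* = \sigma^* \circ i^* = i^* \circ f^*$, which is exactly the asserted commuting square once one applies Borel's identification $H^*(BG;\Q) \cong H^*(BT;\Q)^W$.

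To build the section I would run the obstruction theory for sections of the fibration $G/T \to E \to BT$. The fibre $G/T$ is simply connected, and since $H^*(BT;\Z)$ is a polynomial ring on classes in degree two one has $H^{\mathrm{odd}}(BT; M) = 0$ for every abelian group $M$; therefore every obstruction sitting in an odd cohomological degree of $BT$ vanishes automatically --- in particular the primary obstruction in $H^3(BT;\pi_2(G/T))$ and, more generally, all obstructions with coefficients in an even homotopy group of $G/T$ (and, from the fibration $T \to G \to G/T$, the rational homotopy of $G/T$ is indeed concentrated in odd degrees apart from $\pi_2(G/T)\otimes\Q \cong \Q^{\operatorname{rk} G}$). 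One is then left with obstructions in $H^{\mathrm{even}}(BT; \pi_{\mathrm{odd}}(G/T))$, the lowest of which lives in $H^4(BT; \pi_3(G/T))$. Showing that these classes vanish is the crux, and it is precisely here that the hypothesis that $h$ is a genuine map of spaces enters: the obstruction classes are universal characteristic classes of $E$ evaluated against $h^* = i^* \circ f^*$, and for a topologically realized $f^*$ the scaling factors that appear --- governed in the lowest relevant degree by the quadratic (Killing) invariant when $G$ is simple --- are forced to be integers, and in fact perfect squares, which is exactly what is needed to kill these obstructions by a suitable choice of the section over the low skeleta.

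I expect this last step to be the main obstacle, and it cannot be circumvented by working formally. The conclusion genuinely fails for an arbitrary graded ring endomorphism of $H^*(BG;\Q)$: for example $p_1 \mapsto 2p_1$ on $H^*(B\SO(3);\Q) = \Q[p_1]$ is not of the form $(Bk)^*$ followed by restriction for any homomorphism $k \colon \SO(2) \to \SO(2)$, since that would force $2$ to be a perfect square. So one must feed in that $f^*$ is realized by a map of spaces --- through integral cohomology, Steenrod operations, or $K$-theory --- in order to constrain the a priori merely rational characteristic classes of $E$. Once the existence of a section is established, the rest is routine bookkeeping: a degree count shows that the resulting self-map $\sigma$ of $BT$ acts linearly on $H^2(BT;\Q)$, hence is of the form $Bk$, and tracking integrality ensures $k$ is an honest homomorphism of tori; the required diagram is then immediate from $h^* = i^* \circ f^* = (Bk)^* \circ i^*$.
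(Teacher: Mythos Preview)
The paper does not prove this theorem; it is quoted from Adams--Mahmud \cite{adams1976maps} and used as a black box, so there is no in-paper proof to compare your attempt against.

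Your outline follows the natural strategy and is honest about where it stalls, but the acknowledged gap is essentially the entire content of the theorem. The obstructions in $H^{2k}(BT;\pi_{2k-1}(G/T))$ do not vanish for formal reasons, and ``a suitable choice of the section over the low skeleta'' is a hope, not an argument: the first such obstruction, in $H^4$ with coefficients in $\pi_3(G/T)$, encodes precisely the integrality constraint you illustrate with $p_1\mapsto 2p_1$ on $B\SO(3)$, and nothing in your sketch explains why a topologically realized $f$ forces it --- let alone the higher ones --- to vanish. As written, the proposal is a correct reformulation of the lifting problem together with a list of the easy obstructions, but not a proof.

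For comparison, Adams--Mahmud do not attack the lifting problem by raw obstruction theory. They first pass to an ambient unitary group via a faithful representation $G\hookrightarrow U(N)$, so that $f\circ Bi\colon BT\to BU(N)$ classifies a complex vector bundle over $BT\simeq(\C P^\infty)^r$; every such bundle splits as a sum of line bundles, which immediately lifts the map through the maximal torus of $U(N)$ and supplies exactly the integrality input your sketch is missing. Descending from the torus of $U(N)$ back to the torus of $G$ is then an algebraic argument with Weyl-invariant polynomials, carried out rationally. If you want to complete your approach, this splitting-principle step is the concrete ingredient to feed into the obstruction theory in place of the hand-wave about ``scaling factors being forced to be perfect squares''.
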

Picking such a $k$ for each $f$ defines a map (of sets)
\[
[BG,BG] \rightarrow \operatorname{End}(T).
\]
After fixing a choice of maximal torus, this map is well-defined up to an action of the Weyl group \cite[Corollary 1.8]{adams1976maps}.
If $G$ is a semi-simple Lie group, this map has been studied further in \cite{jackowski1995self}. Of special interest for us is the case where $BG \rightarrow BG$ is a rational equivalence:
\begin{theorem}[\cite{jackowski1995self}, Theorem 3.6]\label{theorem:injhomadmepi}
	Let $G$ be a compact connected Lie group with maximal torus $T$, Weyl group $W$ and lattice $\Lambda$. Then, there is an injective homomorphism of monoids
	\[
	\Theta \colon [BG,BG]_\Q \hookrightarrow (N_{\operatorname{Aut}(\Q \otimes \Lambda)}(W) \cap \operatorname{End}(\Lambda))/W
	\]
Here, $[BG, BG]_\Q$ denotes the set of homotopy classes of all maps $BG \rightarrow BG$ that are rational equivalences.
\end{theorem}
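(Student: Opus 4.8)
\begin{hproof}
The plan is to build $\Theta$ out of the Adams--Mahmud comparison map of \Cref{diagramtorus}, to pin down its image using Borel's computation of $H^{*}(BG;\Q)$ together with the fact that $W$ acts on $T$ as a reflection group, and to deduce injectivity from the homotopical rigidity of classifying spaces. Given a rational equivalence $f\colon BG\to BG$ --- note that composites of such are again rational equivalences, so $[BG,BG]_{\Q}$ is a monoid under composition --- \Cref{diagramtorus} produces a group homomorphism $k_{f}\colon T\to T$ with $i^{*}\circ f^{*}=(Bk_{f})^{*}\circ i^{*}$, where $i\colon BT\to BG$. Since $k_{f}$ is an honest homomorphism of compact tori it preserves the integral lattice, hence induces $(k_{f})_{*}\in\operatorname{End}(\Lambda)$; I would set $\Theta(f):=[(k_{f})_{*}]$. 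By \cite[Corollary~1.8]{adams1976maps} the homomorphism $k_{f}$ is determined by $f$ up to the $W$-action, so $\Theta(f)$ is a well-defined element of $\operatorname{End}(\Lambda)/W$, and stacking the Adams--Mahmud squares for $f$ and $g$ shows that $k_{f}\circ k_{g}$ serves as a $k_{f\circ g}$, so that $\Theta$ preserves composition and identities.

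To see the image lands in the normalizer, put $V:=H^{2}(BT;\Q)=\Q\otimes\Lambda^{*}$, so that $H^{*}(BT;\Q)=\operatorname{Sym}(V)$ and, by Borel's theorem, $i^{*}$ identifies $H^{*}(BG;\Q)$ with $\operatorname{Sym}(V)^{W}$; since $W$ is a reflection group, Chevalley--Shephard--Todd gives that $\operatorname{Sym}(V)^{W}$ is polynomial and that $\operatorname{Sym}(V)$ is module-finite over it, with $\operatorname{Frac}(\operatorname{Sym}(V))/\operatorname{Frac}(\operatorname{Sym}(V)^{W})$ Galois of group $W$. Under these identifications $(Bk_{f})^{*}=\operatorname{Sym}(\kappa)$ for $\kappa:=\bigl((k_{f})_{*}\otimes\Q\bigr)^{\vee}\in\operatorname{End}(V)$, and the Adams--Mahmud square says that $\operatorname{Sym}(\kappa)$ carries $\operatorname{Sym}(V)^{W}$ onto itself, acting there as the automorphism $f^{*}$ (an automorphism since $f$ is a rational equivalence). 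Hence $\operatorname{Sym}(V)^{W}=f^{*}(\operatorname{Sym}(V)^{W})\subseteq\operatorname{Sym}(\kappa(V))\subseteq\operatorname{Sym}(V)$, and as the outer ring is integral over $\operatorname{Sym}(V)^{W}$ so is the middle one; a Krull-dimension count then forces $\dim\kappa(V)=\dim V$, i.e. $\kappa\in\operatorname{Aut}(V)$. For each $w\in W$ the automorphism $\operatorname{Sym}(\kappa)^{-1}\,w\,\operatorname{Sym}(\kappa)$ of $\operatorname{Sym}(V)$ then fixes $\operatorname{Sym}(V)^{W}$ pointwise, hence lies in $W$ by the Galois description; so $\kappa\in N_{\operatorname{Aut}(V)}(W)$ and dually $(k_{f})_{*}\otimes\Q\in N_{\operatorname{Aut}(\Q\otimes\Lambda)}(W)$. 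Thus $\Theta$ is a monoid homomorphism into $\bigl(N_{\operatorname{Aut}(\Q\otimes\Lambda)}(W)\cap\operatorname{End}(\Lambda)\bigr)/W$.

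For injectivity, suppose $\Theta(f)=\Theta(g)$; then $(Bk_{f})^{*}$ and $(Bk_{g})^{*}$ differ by the action of an element of $W$, which is invisible after restriction to $W$-invariants, so $f^{*}=g^{*}$ on $H^{*}(BG;\Q)$. The remaining step --- that a rational equivalence $BG\to BG$ is determined up to homotopy by its effect on rational cohomology --- is the genuinely hard part, and it is not formal: although $H^{*}(BG;\Q)$ being polynomial makes $(BG)_{\Q}$ a product of even Eilenberg--MacLane spaces, so that maps into the rationalization are detected by rational cohomology, the torsion in $H^{*}(BG;\Z)$ means the fibre of $BG\to(BG)_{\Q}$ can carry nontrivial obstructions to a homotopy between $f$ and $g$. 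I would close this gap exactly as in \cite{jackowski1995self}: invoke the detection of maps out of $BG$ by their restriction to a maximal torus modulo $W$, established through the mod-$p$ homotopy decompositions of $BG$ at each prime together with an arithmetic fracture argument. This rigidity input is the crux of the statement; the well-definedness, normalizer, integrality, and multiplicativity claims are bookkeeping around \Cref{diagramtorus} and classical invariant theory.
\end{hproof}
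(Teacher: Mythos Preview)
The paper does not give its own proof of this theorem: it is quoted verbatim from \cite[Theorem~3.6]{jackowski1995self} and used as a black box, so there is nothing in the present paper to compare your argument against. Your sketch is a fair outline of the Jackowski--McClure--Oliver proof strategy in the cited reference --- construct $\Theta$ via the Adams--Mahmud lift of \Cref{diagramtorus}, land in the normalizer using Borel's $H^{*}(BG;\Q)\cong\operatorname{Sym}(V)^{W}$ together with Chevalley--Shephard--Todd/Galois theory, and then invoke the deep mod-$p$ decomposition input for injectivity --- and you correctly flag that the last step is where the real work lies. Since the paper itself treats the statement as an imported result, your write-up goes well beyond what the paper does here.
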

In other words for all rational equivalences $f \colon BG \rightarrow BG$, there exists a homomorphism $\Phi \colon T \rightarrow T$ such that $[\Phi] \in  (N_{\operatorname{Aut}(\Q \otimes \Lambda)}(W) \cap \operatorname{End}(\Lambda))/W$ and $g\vert_{BT} = B\Phi$.
\begin{remark}
	The elements of $(N_{\operatorname{Aut}(\Q \otimes \Lambda)}(W) \cap \operatorname{End}(\Lambda))/W$ are also called the \emph{admissible epimorphisms}. They have been named in \cite[p.5]{adams1976maps} since all maps $k$ making the diagram in \Cref{diagramtorus} commute are contained in this normalizer. \Cref{theorem:injhomadmepi} now tells us that rational equivalences are uniquely determined by their action on the maximal torus.\\
Note that other admissible epimorphisms could still induce maps on $BG$; however, those maps then will not be rational equivalences.
\end{remark}

\section{Proof of the main theorem}
Let $f \colon B\SO(d) \rightarrow B\SO(d)$ be a map covering the identity on $B\Aut(E_d)$. By restricting to arity 2 we obtain a commutative diagram
\[
\begin{tikzcd}
	B\SO(d) \ar[r, "f"] \ar[d] & B\SO(d)\ar[d]\\
	B\Aut(S^{d-1}) \ar[r, "\id"] & B\Aut(S^{d-1})\mathrlap{.}
\end{tikzcd}
\]
Therefore, to prove that all self maps of $E_d^{\SO(d)}$ are automorphisms it suffices to show:
\begin{theorem}\label{bigtheorem}
	Let $f\colon B\SO(d) \rightarrow B\SO(d)$ be a map covering the identity on the homotopy automorphisms $B\Aut(S^{d-1})$. Then, $f$ is homotopic to the identity and in particular an equivalence.
\end{theorem}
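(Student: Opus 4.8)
\begin{hproof}
The plan is to isolate a single rational cohomology class that $f$ is forced to preserve, and then read off the conclusion from the classification of self-maps of $B\SO(d)$ recalled above. The vertical map $\rho\colon B\SO(d)\to B\Aut(S^{d-1})$ classifies the universal $S^{d-1}$-fibration over $B\SO(d)$, which is the Borel construction $B\SO(d-1)\to B\SO(d)$ coming from $S^{d-1}=\SO(d)/\SO(d-1)$. A standard rational homotopy computation, using the evaluation fibration $\map_*(S^{d-1},S^{d-1})\to\operatorname{SAut}(S^{d-1})\to S^{d-1}$ together with the rational homotopy of spheres, gives $\operatorname{SAut}(S^{d-1})\simeq_\Q S^{d-1}$ for $d$ even and $\operatorname{SAut}(S^{d-1})\simeq_\Q K(\Q,2d-3)$ for $d$ odd, so that $H^*(B\Aut(S^{d-1});\Q)$ is a polynomial ring on a single even-degree generator $z$. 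I would then identify $\rho^*z$ by comparing with the vertical tangent bundle of the sphere bundle of the tautological bundle $\gamma_d$: for $d=2n$ even one obtains $\rho^*z=e$, the Euler class in $H^{2n}(B\SO(2n);\Q)$, and for $d=2n+1$ odd one obtains a nonzero multiple of the top Pontryagin class $p_n\in H^{4n}(B\SO(2n+1);\Q)$. Since $f$ covers the identity we have $f^*\rho^*z=\rho^*z$; hence $f^*p_n=p_n$ when $d$ is odd, and $(f^*e)^2=e^2$, so $f^*e=\pm e$, when $d$ is even.

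Suppose first that $d=2n+1$ with $n\ge 1$, or $d=2n$ with $n\ge 3$, so that $\SO(d)$ is simple and $[B\SO(d),B\SO(d)]$ is described by \Cref{cor:so2n+1maps} (respectively its analogue for $\SO(2n)$). The trivial self-map does not cover $\rho$, since $\rho$ is not rationally trivial. An unstable Adams operation $\psi^k$ acts by $k^i$ in cohomological degree $2i$, so the constraint above forces $k^{2n}=1$, i.e.\ $k=1$ and $\psi^k=\operatorname{id}$. In the even case the one remaining candidate is $\psi^k\circ Br$, with $r$ conjugation by a reflection; since $(\psi^k\circ Br)^*e=-k^ne$, this again forces $k=1$, so $f\simeq Br$. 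Thus $f\simeq\operatorname{id}$ (or $f\simeq Br$ when $d$ is even), and in either case $f$ is an equivalence, $Br$ being an involution of $B\SO(2n)$.

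It remains to treat small $d$ and $d=4$. For $d=1$ the space $B\SO(1)$ is contractible; for $d=2$ we have $B\SO(2)=K(\Z,2)$ and $(f^*e)^2=e^2$ forces $f=\psi^{\pm1}$, both equivalences. The genuinely different case is $d=4$, where $\SO(4)$ is only semisimple and there is no direct classification of $[B\SO(4),B\SO(4)]$ to quote. Here I would pull the universal $S^3$-fibration back along $f$ to produce a self-map $g\colon B\SO(3)\to B\SO(3)$ lying over $f$ which is a degree-one equivalence on each fibre; in particular $g$ is not nullhomotopic, and since $\SO(3)$ is simple this gives $g=\psi^k$ with $k$ odd. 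Chasing $g^*\pi^*=\pi^*f^*$ along $\pi^*\colon\Q[e,p_1]=H^*(B\SO(4);\Q)\to H^*(B\SO(3);\Q)=\Q[p_1]$ (which kills $e$ and fixes $p_1$) gives $f^*p_1=\alpha e+k^2p_1$ for some $\alpha\in\Q$; combined with $f^*e=\pm e$ this shows $f$ is a rational equivalence. Then \Cref{theorem:injhomadmepi} applies: $f$ corresponds to an admissible epimorphism $\Phi$ of the maximal torus $T\cong(S^1)^2$ which normalizes the Weyl group $W=\{\pm I,\pm\tau\}$ and whose induced map on $H^*(BT;\Q)$ fixes the restriction $(y_1y_2)^2$ of $e^2$, and an elementary matrix computation shows that modulo $W$ the only such $\Phi$ are the ones induced by $\operatorname{id}$ and $Br$; injectivity of $\Theta$ concludes this case.

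I expect the main obstacle to be the identification of $\rho^*z$ with the Euler class (respectively the top Pontryagin class): this needs both the rational homotopy type of $\operatorname{SAut}(S^{d-1})$ and a vertical-tangent-bundle computation on the universal sphere bundle. The case $d=4$ is the other delicate point, since there one must substitute the detour through $B\SO(3)$ and the maximal-torus criteria of \Cref{diagramtorus,theorem:injhomadmepi} for a direct classification of $[B\SO(4),B\SO(4)]$.
\end{hproof}
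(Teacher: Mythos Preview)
Your overall architecture matches the paper's: isolate a rational class that $f$ must fix, then invoke the Jackowski--McClure--Oliver classification (and, for $d=4$, the admissible-epimorphism criterion) to pin down $f$. The odd case is essentially the paper's argument; the paper cites a result of Krannich--Randal-Williams for the existence of the class pulling back to $p_n$, whereas you propose to extract it from the rational homotopy type of $\operatorname{SAut}(S^{d-1})$, which amounts to the same thing.

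There is, however, a genuine gap in the even case. You deduce only $(f^*e)^2=e^2$, hence $f^*e=\pm e$, and accordingly conclude that $f$ is homotopic to $\operatorname{id}$ \emph{or} to $Br$. That suffices for ``$f$ is an equivalence'', but the theorem asserts the stronger conclusion $f\simeq\operatorname{id}$, and $Br\not\simeq\operatorname{id}$. The paper closes this gap with one extra observation you omit: since $B\SO(2n)$ is simply connected, the map $\rho$ and the homotopy $\rho f\simeq\rho$ lift through the universal cover $B\operatorname{SAut}(S^{2n-1})\to B\Aut(S^{2n-1})$, and the Euler class already lives in $H^{2n}(B\operatorname{SAut}(S^{2n-1});\Q)$ and restricts to $e$. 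This yields $f^*e=e$ on the nose, and then the classification (for $n\ge 3$) and the torus argument (for $n=2$) give $f\simeq\operatorname{id}$ rather than merely $f\simeq\operatorname{id}$ or $Br$. Relatedly, your sentence ``for $d=2n$ even one obtains $\rho^*z=e$'' is inconsistent with your next line: the generator of $H^*(B\Aut(S^{2n-1});\Q)$ lies in degree $4n$, not $2n$, so what you actually get is $\rho^*z=e^2$; the degree-$2n$ class lives only on $B\operatorname{SAut}$.

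Your treatment of $d=4$ via the fibration $B\SO(3)\to B\SO(4)$ is a reasonable alternative to the paper's direct maximal-torus computation, but it is more circuitous: once you have $f^*e=e$ (from the lift above), the paper simply reads off $Bk^*(x_1x_2)=x_1x_2$ on $H^*(BT;\Q)$ and checks by hand that the only integral $2\times 2$ matrices with this property lie in the Weyl group $SG(2)$, so \Cref{theorem:injhomadmepi} forces $f\simeq\operatorname{id}$. Your route through a covering self-map of $B\SO(3)$ is interesting but introduces an extra object that is not needed once the sign of $f^*e$ is settled.
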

We will consider the odd and even case separately.

\subsection{Self Maps of $B\SO(2n+1)$}
Let $f\colon B\SO(2n+1) \rightarrow B\SO(2n+1)$ be a map such that the following diagram commutes up to homotopy
\begin{equation}\label{diagramwithi}
\begin{tikzcd}
	B\SO(2n+1) \ar[r, "f"] \ar[d, "i"] & B\SO(2n+1) \ar[d, "i"]\\
	B\text{Aut}(S^{2n}) \ar[r,"\id"] & B\text{Aut}(S^{2n})
\end{tikzcd}
\end{equation}
If $n =0$, then $\SO(1) = \{*\}$ and the theorem is trivially true.\\
If $n >0$, then by \cite[Lemma B.1]{krannich2021diffeomorphisms} there is a class $\varepsilon \in H^*(B\text{Aut}(S^{2n});\Q)$ such that
\begin{equation}\label{characteristicclassmaps}
	i^*(4 \epsilon)  = p_n \in H^{4n}(B\SO(2n+1);\Q).
\end{equation}
Therefore, by \eqref{diagramwithi} we have $f^*(p_n) = p_n$ in rational cohomology. By \Cref{cor:so2n+1maps}, $f \simeq \psi^k \circ B\alpha$ where 
\[
\alpha \colon \SO(2n+1) \rightarrow \SO(2n+1)
\]
is the identity or trivial map and $\psi^k$ is the unstable Adams operation of degree $k$. However, since $f^*(p_n) = p_n$ we obtain that $k = 1$ and $\alpha$ is the identity on $\SO(2n+1)$. Therefore, $f$ is homotopic to the identity.

\subsection{Self Maps of $B\SO(2n)$}
Let $f \colon B\SO(2n) \rightarrow B\SO(2n)$ be a map covering the identity on $B\text{Aut}(S^{2n-1})$.
Since $B\SO(2n)$ is simply connected, this map also covers the identity on the oriented homotopy automorphisms of the sphere $B\text{SAut}(S^{2n-1})$. The Euler class in $H^{2n}(B\text{SAut}(S^{2n-1});\Q)$ pulls back to the Euler class in $H^{2n}(B\SO(2n);\Q)$. As we cover the identity on $B\text{SAut}(S^{2n-1})$ we obtain
\[
f^*(e) = e.
\]
Thus, to prove our main theorem it suffices to show:
\begin{lemma}\label{lemmaeuler}
	Let $f\colon B\mathrm{SO}(2n) \rightarrow B \mathrm{SO}(2n)$ such that $f^*(e) = e$. Then, $f$ is homotopic to the identity.
\end{lemma}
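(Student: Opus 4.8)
The plan is to split into cases according to the parity of $n$, using the classification of $[B\SO(2n), B\SO(2n)]$ from the preceding corollary. Recall that every self-map $f$ is homotopic to $\psi^k \circ B\alpha$ where $\alpha \in \{\operatorname{triv}, \id, r\}$ and $\gcd(k, 2^{n-1} n!) = 1$ (in particular $k$ is odd). The strategy is to extract from the hypothesis $f^*(e) = e$ enough rational-cohomological constraints to force $k = 1$ and $\alpha \neq \operatorname{triv}$, and then to rule out $\alpha = r$ by a separate argument, since $r$ acts trivially on rational cohomology and so cannot be detected by characteristic classes alone.

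\textbf{Step 1: determining $k$.} The rational cohomology $H^*(B\SO(2n);\Q)$ is the polynomial ring on the Pontryagin classes $p_1, \dots, p_{n-1}$ and the Euler class $e$, with $e^2 = p_n$ and $\deg e = 2n$. An unstable Adams operation $\psi^k$ acts by $k^i$ in cohomological degree $2i$; in particular it multiplies $e \in H^{2n}$ by $k^n$. Conjugation $r$ and the trivial map $\operatorname{triv}$ both act trivially on $e$ (for $\operatorname{triv}$ because it factors through a point and $H^{2n}(\mathrm{pt}) = 0$, so $\operatorname{triv}^*(e) = 0$). Thus $f^*(e) = k^n e$ when $\alpha \in \{\id, r\}$, and $f^*(e) = 0$ when $\alpha = \operatorname{triv}$. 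The hypothesis $f^*(e) = e \neq 0$ immediately excludes $\alpha = \operatorname{triv}$ and forces $k^n = 1$, hence $k = 1$. So $f \simeq B\id = \id$ or $f \simeq Br$.

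\textbf{Step 2: ruling out $\alpha = r$.} This is the main obstacle, because $r$ is an (outer) automorphism of $\SO(2n)$ that acts trivially on all of $H^*(B\SO(2n);\Q)$, so $f^*(e) = e$ holds for $f = Br$ as well; the hypothesis alone does not distinguish $Br$ from the identity. However, the lemma only needs to conclude that $f$ is homotopic to the identity, and here I would invoke that $r$, being conjugation by $\operatorname{diag}(-1,1,\dots,1) \in \LO(2n)$, is an \emph{inner} automorphism of $\LO(2n)$, so that $Br$ and $\id$ induce homotopic self-maps already on $B\SO(2n)$ — more carefully: the two inclusions $\SO(2n) \hookrightarrow \SO(2n)$ given by $\id$ and by $r$ are conjugate inside $\LO(2n)$, hence the induced maps $B\SO(2n) \to B\SO(2n)$, viewed as self-maps, are homotopic via the path in $\LO(2n)$ joining $\operatorname{diag}(-1,\dots)$-conjugation to the identity after passing to $B\LO(2n)$; concretely, $Br \simeq \id$ as self-maps of $B\SO(2n)$ whenever one only records the data up to homotopy, because $B(\operatorname{conj}_g) \simeq \id_{BG}$ for any $g$ in a connected group and $r$ is conjugation by an element of the connected group $\LO(2n)^\circ$... — wait, $\operatorname{diag}(-1,1,\dots,1) \notin \SO(2n)$, so I should instead argue directly. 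The cleaner route: since $f \simeq \psi^1 \circ Br = Br$ and $Br$ is a homotopy equivalence (as $r$ is an automorphism), $f$ is in any case an equivalence, which already suffices for the application to \Cref{maintheorem}; to get the sharper statement ``homotopic to the identity'' one observes that the class $[Br] \in [B\SO(2n), B\SO(2n)]$, while a priori distinct from $[\id]$, still yields an automorphism of $E_d^{\SO(2n)}$, so for the purposes of the paper it is enough to note $f$ is invertible. I would therefore state Step 2 as: if $\alpha = r$ then $f = Br$ is a homotopy equivalence, completing the proof that $f$ is an equivalence; and if the stronger conclusion is wanted, note that $Br$ and $\id$ become homotopic after one further observation about the action on the sphere, namely that covering the identity on $B\Aut(S^{2n-1})$ pins down $f$ on the nose. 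I expect Step 2 to require the most care, and the honest fix is likely either (a) to weaken the lemma's conclusion to ``$f$ is an equivalence,'' or (b) to feed in the compatibility with $B\Aut(S^{2n-1})$ to see that $Br$ does not cover the identity there while the identity does — checking which of these holds is the crux.
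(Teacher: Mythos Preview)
Your Step~2 rests on a false computation: the outer automorphism $r$ (conjugation by $D=\operatorname{diag}(-1,1,\dots,1)$) does \emph{not} act trivially on $H^*(B\SO(2n);\Q)$. On the maximal torus, conjugation by $D$ sends the first rotation angle to its negative, so on $H^*(BT;\Q)=\Q[x_1,\dots,x_n]$ the induced map is $x_1\mapsto -x_1$, $x_i\mapsto x_i$ for $i\ge 2$. Since $i^*(e)=x_1\cdots x_n$, this gives $Br^*(e)=-e$. (Equivalently: $Br$ classifies the universal oriented bundle with its orientation reversed.) The Pontryagin classes, being symmetric in the $x_i^2$, are fixed, which is perhaps what misled you. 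Once this is corrected, your Step~1 already finishes the case $n\ge 3$: from $f\simeq \psi^k\circ B\alpha$ one gets $f^*(e)=\pm k^n e$ with the sign $-1$ exactly when $\alpha=r$, so $f^*(e)=e$ forces $\alpha=\id$ and $k=1$. This is precisely the paper's argument for $n>2$; all of your Step~2 (the attempt to make $Br\simeq\id$, the suggestion to weaken the conclusion, etc.) is unnecessary and in parts incorrect---for instance, $D\notin\SO(2n)$, so $r$ is genuinely outer and $Br$ is \emph{not} homotopic to the identity.

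There is a second, independent gap: the classification $[B\SO(2n),B\SO(2n)]\cong\{\operatorname{triv},\id,r\}\wedge\{\dots\}$ that you invoke requires $\SO(2n)$ to be simple, i.e.\ $n\ge 3$. Your proposal never treats $n=1$ or $n=2$. The paper handles these separately: for $n=1$ one has $B\SO(2)\simeq\C P^\infty$ with cohomology generated by $e$, so $f^*(e)=e$ forces $f\simeq\id$ immediately. For $n=2$ the group $\SO(4)$ is only semisimple, and the paper argues via the maximal torus: the lift $k\colon T\to T$ satisfies $Bk^*(x_1x_2)=x_1x_2$, a direct computation shows $Bk^*$ must then lie in the Weyl group, and the injectivity of the map $\Theta$ into admissible epimorphisms (\Cref{theorem:injhomadmepi}) forces $f\simeq\id$. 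You will need to supply both of these low-dimensional arguments.
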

\begin{proof}
For $n=1$ we have $B\SO(2) = \C P^\infty$, i.e. the cohomology ring $H^*(B\SO(2);\Z)$ is the polynomial ring generated by the Euler class. Therefore, if $f^*(e) = e$ we obtain that $f$ is the identity in cohomology. Therefore it is also homotopic to the identity by the classification of self maps of the circle.\\
Now, let $n = 2$. By \Cref{diagramtorus} there is a group homomorphism $k \colon T \rightarrow T$ making the diagram
\begin{equation}\label{diagramtorusrationalcohomology}
	\begin{tikzcd}
	H^*(B\SO(4); \Q) \ar[d, "i^*"] & H^*(B\SO(4);\Q) \ar[l,"f^*"] \ar[d,"i^*"] \\
	H^*(BT;\Q) & H^*(BT;\Q) \ar[l,"Bk^*"]
	\end{tikzcd}
\end{equation}
commute. By \Cref{exampletorus} $T$ has dimension 2. Therefore we obtain 
\[
H^*(BT;\Q) \cong \Q[x_1, x_2]
\]
with $|x_i|=2$. Furthermore, by \cite[Example 5]{may2005note} we have
\[
i^*(e) = x_1 x_2
\]
By the commuting diagram above, we obtain
\[
Bk^*(x_1 x_2) = x_1 x_2.
\]
Note that there are integers $a_{ij} \in \Z$ such that
\[
H^2(Bk;\Q) \cdot \begin{pmatrix}
	x_1\\
	x_2
\end{pmatrix} = \begin{pmatrix}
	a_{11}x_1 + a_{12}x_2\\
	a_{21}x_1 + a_{22}x_2
\end{pmatrix}.
\]
Since $Bk^*(x_1 x_2) = x_1 x_2$ we obtain
\[
(a_{11}x_1 + a_{12}x_2) (a_{21}x_1 + a_{22}x_2) = a_{11}a_{21}x_1^2 + a_{12}a_{22}x_2 + (a_{11}a_{22} + a_{12}a_{21})x_1x_2 = x_1 x_2.
\]
This implies that either $a_{11} = 0$ or $a_{12} = 0$.\\
If $a_{11} = 0$ we obtain that $a_{12}a_{21}= 1$, i.e. $a_{12} = a_{21} = \pm 1$.\\
Similarly, if $a_{12} = 0$ we obtain that $a_{11} = a_{22} = \pm 1$.\\
Either way, we obtain that $Bk^*$ is an equivalence in rational cohomology. As the vertical maps in \eqref{diagramtorusrationalcohomology} are injective by \cite[Example 5]{may2005note} we obtain that $f^*$ is an equivalence. Since $B\SO(4)$ is simply connected we obtain that $f$ is a rational equivalence.\\
Recall that by \Cref{theorem:injhomadmepi} we have an injection
\[
\Theta \colon [B\SO(4),B\SO(4)] \hookrightarrow (N_{\GL_2(\Q)}(W(\SO(4)) \cap \text{End}(\Z^2))/(W(\SO(4))
\]
which maps $f$ to $k$. By the above calculation of $Bk^*$ we already have that $k$ is an element of the Weyl group of $\SO(4)$. As $\Theta$ is an injective map, we obtain that $f$ is homotopy equivalent to the identity.\\
For $n > 2$ we use the classification of self maps of simple Lie groups. Recall that all maps $f$ arise from the composition of an unstable Adams operation with a map induced by an endomorphism of $\SO(2n)$.\\
However, note that $f$ cannot arise from a composition with an unstable Adams operation as $f^*(e) = e$. In addition, we know that all endomorphisms (up to inner automorphism) are either given by the trivial map, the identity map, or by conjugation with the reflection. The only one of those maps that maps the Euler class to itself is the identity map. Therefore we also obtain that $f$ is homotopic to the identity.
\end{proof}

\section{Variants}
\subsection{$\LO(n)$ acting on $E_n$}
We have a natural action of $\LO(n)$ on $\R^n$, i.e. we also have an operad $\smash{E_n}^{\LO(n)}$. Using the main theorem we can show:
\begin{theorem}
	All  self maps of $E_n^{\LO(n)}$ are invertible.
\end{theorem}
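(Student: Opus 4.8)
The plan is to run the argument behind \Cref{bigtheorem} for $\SO(n)$ and then bootstrap along the fibration sequence
\[
B\SO(n) \longrightarrow B\LO(n) \longrightarrow B\Z/2
\]
coming from the split extension $1 \to \SO(n) \to \LO(n) \to \Z/2 \to 1$. By \Cref{lemma:automorphismcorrespondence} applied to $G = \LO(n)$, an endomorphism of $\smash{E_n}^{\LO(n)}$ is the same datum as a map $f \colon B\LO(n) \to B\LO(n)$ over $B\Aut(E_n)$, and it is an automorphism exactly when $f$ is an equivalence; restricting to arity $2$ we may as well work over $B\Aut(S^{n-1})$, so the map $\theta \colon B\LO(n) \to B\Aut(S^{n-1})$ induced by the standard action satisfies $\theta \circ f \simeq \theta$.

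First I would observe that $\theta$ is an isomorphism on $\pi_1$: both sides have fundamental group $\Z/2$, and the reflection in $\LO(n)$ (the antipodal map of $S^{n-1}$ when $n$ is odd) acts on $S^{n-1}$ with degree $-1$, hence lands in the nontrivial component of $\Aut(S^{n-1})$. Therefore $f_* = \id$ on $\pi_1 B\LO(n) = \Z/2$, so $f$ covers the identity on $B\Z/2$ and restricts to a self-map $f' \colon B\SO(n) \to B\SO(n)$ of the fibre, fitting into a square $f \circ \iota \simeq \iota \circ f'$ with $\iota \colon B\SO(n) \hookrightarrow B\LO(n)$. Precomposing the arity-$2$ square for $f$ with $\iota$ and using $f \circ \iota \simeq \iota \circ f'$ shows that $f'$ covers the identity on $B\Aut(S^{n-1})$ with respect to the map $\theta \circ \iota$, which is precisely the map induced by the standard $\SO(n)$-action on $S^{n-1} = \partial D^n$. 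Thus \Cref{bigtheorem} applies and yields $f' \simeq \id_{B\SO(n)}$.

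It then remains to propagate this back up: $f$ is an endomorphism of the fibration $B\SO(n) \to B\LO(n) \to B\Z/2$ over the identity of the base which restricts to (a map homotopic to) the identity on the fibre, so by the five lemma applied to the long exact sequences of homotopy groups — equivalently, using $\pi_k(B\LO(n)) \cong \pi_{k-1}(\SO(n))$ for $k \geq 2$ together with $\pi_1(B\LO(n)) = \Z/2$ — the map $f$ is a weak equivalence, hence an equivalence, and by \Cref{lemma:automorphismcorrespondence} the corresponding endomorphism of $\smash{E_n}^{\LO(n)}$ is an automorphism. The point requiring care, and the step I expect to be the main obstacle, is the verification in the second paragraph that $f'$ genuinely satisfies the hypothesis of \Cref{bigtheorem}: one must check that restricting the homotopy-coherent square of \Cref{lemma:automorphismcorrespondence} to the fibre produces the standard arity-$2$ comparison map $B\SO(n) \to B\Aut(S^{n-1})$ along the verticals and the identity along the bottom, rather than merely some map factoring through $B\mathrm{SAut}(S^{n-1})$ with an uncontrolled homotopy. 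The remaining ingredients — the $\pi_1$-computation and the five-lemma step — are routine.
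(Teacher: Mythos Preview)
Your proposal is correct and follows essentially the same route as the paper: lift $f$ to a self-map of the universal cover $B\SO(n)$, apply \Cref{bigtheorem} there, and handle $\pi_1$ separately via the isomorphism $\pi_0(\LO(n)) \cong \pi_0(\Aut(S^{n-1}))$. Your version is in fact more careful than the paper's, which records the lifting step and the $\pi_1$-step in one line each without flagging the homotopy-coherence concern you raise.
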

\begin{proof}
Let $f \colon B\LO(n) \rightarrow B\LO(n)$ be a map such that 
\[
\begin{tikzcd}
	B\LO(n) \ar[r, "f"] \ar[d] & B\LO(n) \ar[d]\\
	B\text{Aut}(S^{n-1}) \ar[r, "\id"] & B\text{Aut}(S^{n-1})
\end{tikzcd}
\]
commutes up to homotopy. Note that $f$ lifts to a map $\tilde f \colon B\SO(n) \rightarrow B\SO(n)$. As we have proven in the previous section, this map is an equivalence. Therefore, we know that $f$ induces an isomorphism on all homotopy groups except for the fundamental group. On the fundamental group we know that the vertical map is exactly the map
\[
\pi_0(\LO(n)) \rightarrow \pi_0(\text{Aut}(S^{n-1})).
\]
This map is an isomorphism and therefore $f$ is also an equivalence on the fundamental group.
Overall we obtain that $f$ is a homotopy equivalence.
\end{proof}

\subsection{$\SO(2n)$ acting on $E_{2n+1}$}
Through the inclusion $\SO(2n) \hookrightarrow \SO(2n+1)$ we obtain an action of $\SO(2n)$ on $E_{2n+1}$, that is we obtain an operad $\smash{E_{2n+1}}^{\SO(2n)}$.
\begin{theorem}
All self maps of $E_{2n+1}^{\SO(2n)}$ are invertible for $n\geq 1$.	
\end{theorem}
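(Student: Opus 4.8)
The plan is to run the same argument as for \Cref{maintheorem}. By \Cref{lemma:automorphismcorrespondence}, an endomorphism of $E_{2n+1}^{\SO(2n)}$ corresponds to a map $f\colon B\SO(2n)\to B\SO(2n)$ lying over $B\Aut(E_{2n+1})$; restricting to arity $2$, where $E_{2n+1}(2)\simeq S^{2n}$, such an $f$ lies over $B\Aut(S^{2n})$. Here the structure map $\theta\colon B\SO(2n)\to B\Aut(S^{2n})$ factors as $B\SO(2n)\xrightarrow{\,j\,}B\SO(2n+1)\xrightarrow{\,i\,}B\Aut(S^{2n})$, with $j$ induced by the inclusion $\SO(2n)\hookrightarrow\SO(2n+1)$. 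So it is enough to prove that every such $f$ is a homotopy equivalence.

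First I would extract a cohomological constraint. By \cite[Lemma B.1]{krannich2021diffeomorphisms} there is a class $\varepsilon\in H^{4n}(B\Aut(S^{2n});\Q)$ with $i^*(4\varepsilon)=p_n$ in $H^{4n}(B\SO(2n+1);\Q)$, and under $j^*$ this $p_n$ maps to $e^2$ in $H^*(B\SO(2n);\Q)=\Q[p_1,\dots,p_{n-1},e]$. The covering condition $\theta\circ f\simeq\theta$ gives $f^*\theta^*=\theta^*$, and applying this to $4\varepsilon$ yields $f^*(e^2)=\theta^*(4\varepsilon)=e^2$. Since this rational cohomology ring is an integral domain, $f^*(e)=\pm e$. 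Note that, unlike for $E_{2n}^{\SO(2n)}$, one does not obtain $f^*(e)=e$ directly: the arity-$2$ sphere is now the even-dimensional $S^{2n}$, whose $S^{2n}$-fibration over $B\SO(2n)$ is the sphere bundle of the rank-$(2n+1)$ bundle $\R^{2n}\oplus\R$, so its Euler class is rationally trivial --- this is why one must argue through $p_n$ instead.

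Finally I would invoke \Cref{lemmaeuler}, which already covers every $n\geq 1$. If $f^*(e)=e$, then $f$ is homotopic to the identity and we are done. If $f^*(e)=-e$, I would post-compose with the reflection $r\colon B\SO(2n)\to B\SO(2n)$ induced by conjugation by $\operatorname{diag}(-1,1,\dots,1)$ (for $n=1$, by complex conjugation on $S^1$), as in the discussion preceding \Cref{lemmaeuler}; it is a homotopy equivalence with $r\circ r\simeq\id$ and $r^*(e)=-e$, so $(f\circ r)^*(e)=e$ and hence $f\circ r\simeq\id$ by \Cref{lemmaeuler}, giving $f\simeq r$, again a homotopy equivalence. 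Either way $f$ is invertible, so every endomorphism of $E_{2n+1}^{\SO(2n)}$ is an automorphism.

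I do not expect a genuine obstacle: the only new feature compared with \Cref{maintheorem} is the sign ambiguity in $f^*(e)$, which the reflection trick resolves. The steps requiring care are essentially bookkeeping --- that the arity-$2$ structure map factors through $B\SO(2n+1)$ as claimed, that $\varepsilon$, $p_n$ and $e$ match up along $i$ and $j$, and that $r$ itself lies over the identity on $B\Aut(S^{2n})$. The last point holds because $r$ is the restriction to $\SO(2n)$ of conjugation by $\operatorname{diag}(-1,1,\dots,1,-1)\in\SO(2n+1)$, which is an inner automorphism and therefore induces the identity on $B\SO(2n+1)$, hence on $B\Aut(S^{2n})$.
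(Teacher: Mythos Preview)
Your proposal is correct and follows essentially the same route as the paper: reduce to a self map $f$ of $B\SO(2n)$ over $B\Aut(S^{2n})$, factor the structure map through $B\SO(2n+1)$, use the Kupers--Randal-Williams class $\varepsilon$ together with $j^*(p_n)=e^2$ to force $f^*(e)=\pm e$, and then invoke \Cref{lemmaeuler} directly or after composing with the reflection $r$. Two minor comments: your ``post-compose'' should read ``precompose'' given that you form $f\circ r$, and your verification that $r$ lies over the identity on $B\Aut(S^{2n})$ is a nice observation but is not actually needed, since the hypothesis of \Cref{lemmaeuler} only concerns the Euler class.
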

\begin{proof}
Let $f \colon B\SO(2n) \rightarrow B\SO(2n)$ be a map such that
\[
\begin{tikzcd}
	B\SO(2n) \ar[r, "f"] \ar[d] & B\SO(2n) \ar[d]\\
	B\text{Aut}(S^{2n}) \ar[r, "\id"] & B\text{Aut}(S^{2n})
\end{tikzcd}
\]
commutes up to homotopy.
The vertical maps factor through  the canonical map $j \colon B\SO(2n) \rightarrow B\SO(2n+1)$. For the $n$-th Pontryagin class $p_n \in H^{4n}(B\SO(2n+1);\Q)$ we obtain $j^*(p_n) =e^2 \in H^{4n}(B\SO(2n);\Q)$.\\
Furthermore, as stated in \eqref{characteristicclassmaps} there is a characteristic class $\epsilon\in H^*(B\text{Aut}(S^{2n};\Q))$ such that $4 \epsilon$ pulls back to $p_n$.\\
Putting this all together, we get that $f^*(e^2) = e^2$. By \cite[Theorem 15.9]{milnor1974characteristic} $H^*(B\SO(2n); \Q) \cong \Q[p_1, \dots, p_n]$ where $p_i$ denotes the $i$-th Pontryagin class, i.e. this ring is a division algebra. Therefore, $f^*(e) = \pm e$. If $f^*(e) = e$, we can directly apply \Cref{lemmaeuler} to obtain that $f$ is an equivalence. Otherwise, we can precompose $f$ with the map induced by conjugation by reflection (which is an equivalence) and then apply \Cref{lemmaeuler} to that new map to obtain the claim.
\end{proof}

\begin{remark}
We also have inclusions $\SO(k) \hookrightarrow \SO(2n+1)$ for $k < 2n$, that is we also have the operad $\smash{E_{2n+1}}^{\SO(k)}$. It is whether all endomorphisms of that operad are invertible.
\end{remark}


\subsection{Endomorphisms of the (framed) Swiss Cheese Operad}
The swiss cheese operad was first defined in \cite[Section 1]{voronov1999swiss}. We will recall the version defined in \cite[11]{kontsevich1999operads}:
\begin{definition}
Let $D^d\subset \R^{d}$ be the standard $d$-disk. Let $HD^d$ be the intersection of $D^d$ with the upper half plane, that is
\[
HD^d = \left \{(x_1, \dots,x_d) \in \R^d \mid x_1 \geq 0 \text{ and } \sum_{i=1}^d x_i^2 < 1\right \}.
\]
The \emph{d-dimensional swiss cheese operad} $\SC_d$ is a 2-colored operad with set of colors $S = \{D^d, HD^d\}$. The spaces of operation are given by:
\begin{itemize}
	\item If the output color is $D^d$ we have
\[
\SC_d(s_1, \dots, s_n; D^d) = \begin{cases}
	E_d(n) & \text{if } s_1 = \dots = s_n = D^d,\\
	\emptyset & \text{else.}
\end{cases}
\]
\item If the output color is $HD^d$ we have that
\[
\SC_d(s_1, \dots, s_n; HD^d) = \operatorname{emb}\left(\coprod_n s_i, HD^d\right)
\]
such that
\[
s_i = HD^d \Rightarrow j(\partial s_i) \subset \partial HD^d \quad \forall j \in \SC_d(s_1, \dots, s_n; HD^d)
\]
and can only scale by a positive number and translate the disks.
\end{itemize}
The operadic composition of the swiss cheese operad arises in the same way as the operadic composition of the little disk operad.
\end{definition}

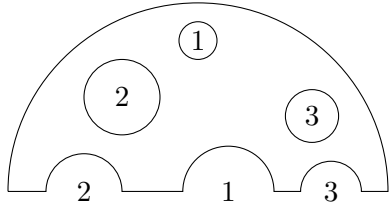
\begin{figure}[t]
\centering 
\begin{tikzpicture}[scale = 0.5]

\draw (-5,0) -- (-4,0)  arc(180:0:1)-- (-0.4,0) arc(180:0:1.2)-- (2.7,0) arc(180:0:0.8) -- (5,0) arc(0:180:5) ;
\draw (0,4) node{1} circle[radius = 0.5] ;
\draw (3,2) node{3} circle[radius = 0.7] ;
\draw (-2,2.5) node{2} circle[radius = 1] ;
\draw (-3,0) node{2};
\draw (0.8,0) node{1};
\draw (3.5,0) node{3};
\end{tikzpicture}
\caption{An element in $\SC(3,3)$}
\end{figure}

We will now define a framed version of the swiss cheese operad, following the approach from \cite{salvatore2003framed}:
\begin{definition}
The \emph{framed swiss cheese Operad} $\SC_d^{\SO(d)}$ is a 2-colored operad with set of colors $S = \{D^d,HD^d\}$. Its set of operations is given by
\[
\SC_d^{\SO(d)}(s_1, \dots, s_n; D^d) = \begin{cases}
	E_d^{\SO(d)}(n) & \text{if } s_1 = \dots s_n = D^d, \\
	\emptyset & \text{else,}
\end{cases}
\]
if the output color is $D^d$ and
\[
\SC_d^{\SO(d)}(s_1, \dots, s_n; HD^d) = \SC_d(s_1, \dots, s_n;HD^d) \times \SO(d)^{\times k} \times \SO(d-1)^{\times n-k}
\]
if the output color is $HD^d$. Here, $k$ is the number of $s_i$ with $s_i = D^d$. The operadic composition arises in the same way as the operadic composition of the framed little disk operad.
\end{definition}
An element in $\smash{\SC_d}^{\SO(d)}(s_1, \dots, s_n; HD^d)$ should be interpreted as an element in $\SC_d(s_1, \dots, s_n;HD^d)$ where we assign each embedded full disk a rotation and each embedded half disk an orientation of its boundary.
\subsubsection{Endomorphisms of the swiss cheese operad preserving colors}
\begin{theorem}
\begin{enumerate}
\item Let $f \colon \SC_d \rightarrow \SC_d$ be an endomorphism of the swiss cheese operad such that $f$ is the identity on the colors. Then, $f$ is an automorphism.\\
\item Let $g \colon \SC_d^{\SO(d)} \rightarrow \SC_d^{\SO(d)}$ be an endomorphism of the framed swiss cheese operad such that it is the identity on the colors. Then, $g$ is an automorphism.
\end{enumerate}
\end{theorem}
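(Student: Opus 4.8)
The plan is to prove that the given colour-preserving endomorphism induces an equivalence on every space of multioperations; since it is the identity on the (discrete, two-element) groupoid of colours, the standard criterion that a morphism of $\infty$-operads is an equivalence as soon as it is essentially surjective on colours and an equivalence on all multioperation spaces will then finish the proof.

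I would first deal with the operations whose output colour is $D^d$. By definition $\SC_d(s_1,\dots,s_n;D^d)$ is empty unless every $s_i$ equals $D^d$, in which case it is $E_d(n)$; so the endomorphism restricts to an endomorphism of the suboperad $E_d\subseteq\SC_d$, which is an automorphism by \cite[Theorem B]{horel2022two}. In the framed case the analogous suboperad is $E_d^{\SO(d)}$ and one invokes \Cref{maintheorem} instead. Hence the endomorphism is already an equivalence on all operations with output $D^d$.

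Next come the operations with output $HD^d$; write $\SC_d(k,m)$ for those having $k$ inputs coloured $D^d$ and $m$ coloured $HD^d$. The main structural ingredient is that forgetting inputs --- that is, composing with the contractible nullary operations $\SC_d(\emptyset;D^d)$ and $\SC_d(\emptyset;HD^d)$ --- yields an equivalence
\[
\SC_d(k,m)\;\longrightarrow\;\SC_d(k,0)\times\SC_d(0,m)\,,
\]
the underlying geometric fact being that a little full disk in the bulk and a little half disk on the boundary can always be shrunk apart: concretely, the projection $\SC_d(k,m)\to\SC_d(0,m)$ (forgetting the $D^d$-inputs) is a fibration, and over it the displayed map is a fibrewise equivalence. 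This equivalence is built from operadic composition, hence compatible with the endomorphism, so it is enough to treat $\SC_d(0,m)$ and $\SC_d(k,0)$ separately. The spaces $\SC_d(0,m)$ are the arity-$m$ operations of the suboperad of $\SC_d$ on the colour $HD^d$; passing to the flat faces of the half disks identifies this suboperad with $E_{d-1}$ (with $E_{d-1}^{\SO(d-1)}$ in the framed case), so the endomorphism restricts there and is an equivalence by \cite[Theorem B]{horel2022two} (resp.\ \Cref{maintheorem} in dimension $d-1$; for $d=1$ this suboperad is $E_0$, whose only endomorphism is the identity, and there $\SO(1)$ is trivial so the framed statement coincides with the unframed one). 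Finally, for $\SC_d(k,0)$ I would fix an operation $\iota_0\in\SC_d(D^d;HD^d)$ --- the inclusion of a little disk into the bulk --- and note that substituting the $k$-ary operations of the suboperad $E_d$ into its single slot gives a map $E_d(k)\to\SC_d(k,0)$ which is an equivalence; since $\SC_d(D^d;HD^d)$ is contractible, this map is compatible (up to homotopy) with the endomorphism, which reduces the remaining claim to the already-treated case of $E_d(k)$. This proves part (1).

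Part (2) runs along the same lines, and the single point where I expect to need extra care is that $\SC_d^{\SO(d)}(D^d;HD^d)=\SO(d)$ is no longer contractible, so the very last step above must be modified. Write $g$ for the endomorphism. The observation is that for \emph{every} $\rho_0\in\SO(d)=\SC_d^{\SO(d)}(D^d;HD^d)$, substituting the $k$-ary operations of $E_d^{\SO(d)}$ into $\rho_0$ defines an equivalence $E_d^{\SO(d)}(k)\to\SC_d^{\SO(d)}(k,0)$: it is the standard inclusion-into-the-bulk map precomposed with the self-homeomorphism of $E_d^{\SO(d)}(k)$ rotating the whole configuration by $\rho_0$. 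Consequently, although $g$ may carry the chosen $\rho_0$ to a different element $g(\rho_0)\in\SO(d)$, the resulting naturality square has equivalences along its other three sides and forces $g$ to be an equivalence on $\SC_d^{\SO(d)}(k,0)$. Combining this with the two suboperad steps (using \Cref{maintheorem} in dimensions $d$ and $d-1$) and the forgetful decomposition, $g$ is an equivalence on all operation spaces, hence an automorphism.
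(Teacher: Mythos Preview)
Your proposal is correct and follows the same strategy as the paper: decompose $\SC_d(k,m)\simeq\SC_d(k,0)\times\SC_d(0,m)$ via composition with the nullary operations, identify the two factors with $E_d(k)$ and $E_{d-1}(m)$, and invoke \cite[Theorem~B]{horel2022two} (respectively \Cref{maintheorem} in the framed case). The only cosmetic difference is that the paper establishes $\SC_d(k,0)\simeq E_d(k)$ geometrically through configuration spaces rather than via substitution into a unary operation $\iota_0\in\SC_d(D^d;HD^d)$; your explicit treatment of the framed subtlety that $\SC_d^{\SO(d)}(D^d;HD^d)\simeq\SO(d)$ is not contractible is, if anything, more careful than the paper's one-line reduction.
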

\begin{proof}
We will first show that $f$ is an equivalence on each space of operations. This will then conclude the first part of the theorem.\\
Note that we have an inclusion $E_d \hookrightarrow \SC_d$ as we have that $E_d(n) = \SC_d(\amalg_n D^d;D^d)$. Similarly, we have an inclusion $E_{d-1} \hookrightarrow \SC_d$ since $E_{d-1}(n) \simeq \SC_d(\amalg_n HD^d; HD^d)$. As we assumed $f$ to be color preserving, we obtain that $f$ restricts to an endomorphism of both $E_d$ and $E_{d-1}$. By \cite[Theorem B]{horel2022two} we obtain that both of those restrictions are automorphisms.\\
It remains to show that $f$ is invertible for general inputs and output $HD^d$. We have a map induced by operadic composition with the 0-ary element:
\begin{equation}\label{mapconfig}
\SC_d(s_1, \dots, s_n; HD^d) \rightarrow  \SC_d(\amalg_{n-k}HD^d; HD^d) \times \SC_d(\amalg_k D^d; HD^d)
\end{equation}
where $k \leq n$ is the number of $s_i$ such that $s_i = D^d$. We will show that this map is an equivalence. By taking the middle point of each (half) disk, we obtain a map
\[
\SC_d(s_1, \dots, s_n; HD^d) \rightarrow \operatorname{Conf}_n(HD^d).
\]
Note that the image of this map is by definition contained in $\operatorname{Conf}_{n-k}(\partial HD^d) \times \operatorname{Conf}_k(\operatorname{Int}(HD^d))$, i.e. we obtain a map
\[
\SC_d(s_1, \dots, s_n; HD^d) \rightarrow \operatorname{Conf}_{n-k}(\partial HD^d) \times \operatorname{Conf}_k(\operatorname{Int}(HD^d)).
\]
This map is an equivalence; its fiber only consists of the possible radii of the disks which is a contractible space.\\
The right hand side in \eqref{mapconfig} also maps into $\operatorname{Conf}_{n-k}(\partial HD^d) \times \operatorname{Conf}_k(\operatorname{Int}(HD^d))$ by taking the mid points of all the (half) disks. For the same reason as above, this map is an equivalence. Therefore, the map \eqref{mapconfig} fits into the following commutative diagram
\[
\begin{tikzcd}
	\SC_d(s_1, \dots, s_n; HD^d)\ar[r] \ar[dr, "\simeq"] & \SC_d(\amalg_{n-k}HD^d; HD^d) \times \SC_d(\amalg_k D^d; HD^d) \ar[d,"\simeq"] \\
	&\operatorname{Conf}_{n-k}(\partial HD^d) \times \operatorname{Conf}_k(\operatorname{Int}(HD^d))
\end{tikzcd}
\]
and is an equivalence.\\
Furthermore, we have $\operatorname{Conf}_{n-k}(\partial HD^d) \times \operatorname{Conf}_k(\operatorname{Int}(HD^d)) \simeq E_{d-1}(n-k) \times E_d(k)$.
Therefore, we obtain a commutative diagram
\[
\begin{tikzcd}
	\SC_d(s_1, \dots, s_n; HD^d) \ar[r] \ar[d, "f"] & E_{d-1}(n-k) \times E_d(k) \ar[d, "\simeq"]\\
	\SC_d(s_1, \dots, s_n; HD^d) \ar[r] & E_{d-1}(n-k) \times E_d(k)
\end{tikzcd}
\]
We obtain that the right vertical map is an equivalence and thus so is $f$.\\
To show that $g$ is an equivalence on each space of operations we consider the same restriction maps as above. $g$ then becomes a map of the framed little disk operad on each space of operations. By the main theorem of this paper, all of those maps must be equivalences and thus $g$ is as well.
\end{proof}

\begin{remark}
The first part of this theorem also follows from the main theorem in \cite{turchin2024mapping}. However, the author uses a Fulton-Macpherson version of the swiss cheese operad to prove his theorem; here we only rely on \cite[Theorem B]{horel2022two} to prove the statement.
The second part of this theorem relies on the main theorem of this paper and does not follow from \cite{turchin2024mapping}.
\end{remark}

\subsubsection{Endomorphism of the swiss cheese operad not preserving colors}
\begin{theorem}
\begin{enumerate}
	\item There are no endomorphisms of the swiss cheese operad mapping the color $D^d$ to $HD^d$.
	\item In contrast, we can construct an endomorphism of the swiss cheese operad such that both colors map to $D^d$; that is we can construct an endomorphism of $\SC_d$ factoring through $E_d$.
\end{enumerate}
\end{theorem}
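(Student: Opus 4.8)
The two statements admit independent, fairly short proofs. For part (1), I would argue by contradiction: suppose $f\colon \SC_d \to \SC_d$ is an endomorphism with $f(D^d)=HD^d$. Applying $f$ to the space of binary operations all of whose colors are $D^d$ produces a $\Sigma_2$-equivariant map
\[
E_d(2)=\mul_{\SC_d}(D^d,D^d;D^d)\ \longrightarrow\ \mul_{\SC_d}(HD^d,HD^d;HD^d)\simeq E_{d-1}(2).
\]
Under the standard identifications $E_d(2)\simeq S^{d-1}$ and $E_{d-1}(2)\simeq S^{d-2}$ --- with the antipodal $\Sigma_2$-action on each, since transposing the two little (half-)disks negates the normalized vector joining their centers --- this would be a $\Sigma_2$-equivariant map $S^{d-1}\to S^{d-2}$ for antipodal actions. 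As $E_d(2)$ carries a \emph{free} $\Sigma_2$-action, the relevant $\infty$-categorical mapping space coincides with the space of genuine equivariant maps, which is empty by the Borsuk--Ulam theorem. Hence no such $f$ exists. (The cases $d\le 1$ are immediate, since then $\mul_{\SC_d}(HD^d,HD^d;HD^d)=\emptyset$.)

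For part (2), the plan is to build a morphism of $\infty$-operads $r\colon \SC_d\to E_d$ restricting to the identity on the full suboperad $E_d\subset \SC_d$ on the color $D^d$, and then to set $g$ equal to the composite $\SC_d\xrightarrow{\ r\ }E_d\hookrightarrow \SC_d$ with the standard inclusion. Then $g$ is an endomorphism of $\SC_d$ with $g(D^d)=g(HD^d)=D^d$, it factors through $E_d$ by construction, and it is idempotent because $r$ is a retraction of the inclusion. To define $r$ at the level of topological operads: send both colors to the single color of $E_d$; let $r$ be the identity on operations with output $D^d$ (which are precisely the spaces $E_d(n)$); and on an operation with output $HD^d$, given by an embedding $\psi\colon \coprod_i s_i\hookrightarrow HD^d$, keep each full-disk input $\psi(s_i)\subset HD^d\subset D^d$ unchanged and reflect each half-disk input $\psi(s_i)$ across the boundary hyperplane of $HD^d$ (on which its flat face lies) to the full little disk with the same center and radius. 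Since $\psi(s_i)$ genuinely lies in $HD^d$, its reflection genuinely lies in $D^d$, so the resulting $n$ pairwise disjoint little disks give a well-defined point of $E_d(n)$.

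The reason to use the \emph{same-radius} reflection --- rather than, say, an inscribed disk of smaller radius, which would break strict compatibility with composition --- is that ``reflect a little half-disk lying on the hyperplane to the full disk with the same center and radius'' commutes with all the affine dilations and translations appearing in the structure maps of $\SC_d$ (both the hyperplane-preserving ones used at $HD^d$-slots and the unconstrained ones used at $D^d$-slots), just as the inclusion $HD^d\subset D^d$ does; hence $r$ commutes strictly with operadic composition, and this upgrades to a morphism of $\infty$-operads via the operadic nerve. \textbf{The main work is this construction of $r$ together with the verification that the recipe is compatible with composition on the nose}; granting that, part (2) is immediate, and part (1) is a short formal consequence of Borsuk--Ulam. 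Note that neither argument uses the main theorem of this paper or \cite{horel2022two}.
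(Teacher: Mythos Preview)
Your construction for part (2) is exactly the paper's: reflect each embedded half-disk across the hyperplane to obtain the full disk with the same center and radius, and leave the full-disk inputs where they are. The paper states this in one line without the compositionality check you spell out, but it is the same map.

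For part (1) your argument is correct but genuinely different from the paper's. The paper splits into two cases according to the image of the second color. If $f(HD^d)=D^d$ it obtains a map from the nonempty space $\SC_d(D^d;HD^d)$ to $\SC_d(HD^d;D^d)=\emptyset$. If $f(HD^d)=HD^d$ it observes that $f$ restricts to an operad map $E_d\to E_{d-1}$, composes with the inclusion $E_{d-1}\hookrightarrow E_d$ to get an endomorphism of $E_{d-1}$, invokes \cite[Theorem~B]{horel2022two} to conclude it is an equivalence, and then derives a contradiction because on arity~2 the composite factors through the nullhomotopic inclusion $S^{d-2}\hookrightarrow S^{d-1}$. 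Your Borsuk--Ulam argument sidesteps both the case split and the appeal to \cite{horel2022two}: since $\mul_{\SC_d}(D^d,D^d;D^d)$ lands in $\mul_{\SC_d}(HD^d,HD^d;HD^d)$ regardless of where $HD^d$ goes, you get a homotopy $\Sigma_2$-map $S^{d-1}\to S^{d-2}$ between antipodal spheres, and freeness of the source (equivalently, the mod~$2$ cohomology of $\mathbb{RP}^{d-1}\to\mathbb{RP}^{d-2}$ over $\mathbb{RP}^\infty$) rules this out. This is strictly more elementary. What the paper's route buys is that it makes visible the stronger underlying obstruction---there is no operad map $E_d\to E_{d-1}$ at all---and ties the statement back to the rigidity theme of the paper; your route shows the result is independent of that rigidity.
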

\begin{proof}
	Let $f \colon \SC_d \rightarrow \SC_d$ be an endomorphism such that the color $D^d$ gets mapped to $HD^d$.\\
	If $f$ were to map the color $HD^d$ to $D^d$, then $f$ would induce a map
	\[
	f \colon \SC_d(D^d; HD^d) \rightarrow \SC_d(HD^d; D^d) \simeq \emptyset
	\]
	which is not possible.\\
	Now, if $f$ were to map the color $HD^d$ to $HD^d$ we would obtain maps
	\[
	E_{d}(n) \simeq \SC_d(D^d, \dots, D^d; D^d) \xrightarrow{f} \SC_d(HD^d, \dots, HD^d) \simeq E_{d-1}(n).
	\]
	This induces a map of operads $E_d \rightarrow E_{d-1}$. Furthermore, we have a map of operads $E_{d-1} \rightarrow E_d$ given by inclusion. The composition of those two maps is then a self map of $E_{d-1}$, i.e. it must be an equivalence. In particular, it will be an equivalence on 2-ary operations. However, on 2-ary operations this map factors as
	\[
	S^{d-1} \hookrightarrow S^d \rightarrow S^{d-1}
	\]
which is a nullhomotopic map. Therefore, this map is not an equivalence and $f$ cannot exist.\\
To prove the second part of this theorem we will construct an endomorphism $g$ of the swiss cheese operad such that $HD^d$ and $D^d$ both map to $D^d$: 
We first define
\[
\SC_d(s_1, \dots, s_n; HD^d) \rightarrow \SC_d(D^d, \dots, D^d; D^d).
\]
 Let $\langle j_1, \dots, j_n \rangle \in \SC_d(s_1, \dots, s_n; HD^d)=\operatorname{emb}(s_1 \amalg \dots \amalg s_n, HD^d)$. If $s_i = D^d$, we will not change $j_i$. If $s_i = HD^d$, we will mirror what $j_i$ is doing onto the lower half.
$g$ will be the identity if the output color is $D^d$.\\
$g$ is now a non-trivial and non-invertible map $\SC_d \rightarrow \SC_d$.
\end{proof}
\begin{remark}
	Both the theorem and the remark also apply to the framed swiss cheese operad using the main theorem from this paper.
\end{remark}

\printbibliography
\end{document}